\newcommand{\ifarticle}[2]{
    \csname@ifclassloaded\endcsname{beamer}{#2}{#1}
}
        \setlist{topsep=2pt,itemsep=2pt,partopsep=2pt,parsep=2pt} % Uniform spacing.
        \xpretocmd{\@adminfootnotes}{\let\@makefntext\BHFN@OldMakefntext}{}{}
        \renewcommand\@makefntext[1]{%
        \@ifundefined{@makefnmark}
            {}
            {%
            \renewcommand\@makefnmark{%
            \mbox{%
                \textsuperscript{%
                \normalfont
                \hyperref[\BackrefFootnoteTag]{\@thefnmark}%
                }%
            }\,%
            }%
            \BHFN@OldMakefntext{#1}%
        }%
        }
        \LetLtxMacro{\BHFN@Old@footnotemark}{\@footnotemark}
        \renewcommand*{\@footnotemark}{%
            \refstepcounter{BackrefHyperFootnoteCounter}%
            \xdef\BackrefFootnoteTag{bhfn:\theBackrefHyperFootnoteCounter}%
            \label{\BackrefFootnoteTag}%
            \BHFN@Old@footnotemark
        }
        \def\paragraph{\@startsection{paragraph}{4}%
          \z@\z@{-\fontdimen2\font}%
          {\normalfont\bfseries}}
        \theoremstyle{plain}
        \newtheorem{theorem}{Theorem}[section]
        \newtheorem{proposition}[theorem]{Proposition}
        \newtheorem{lemma}[theorem]{Lemma}
        \newtheorem{corollary}[theorem]{Corollary}
        \newtheorem*{theorem*}{Theorem}
        \newtheorem*{corollary*}{Corollary}
        \theoremstyle{definition}
        \newtheorem{definition}[theorem]{Definition}
        \newtheorem{example}[theorem]{Example}
        \newtheorem{remark}[theorem]{Remark}
        \newenvironment{sketch}{\proof}{\endproof}
        \Crefname{theoremenumi}{Theorem}{Theorems}
            \setlist[enumerate,1]{
                ref={\csname thetheorem\endcsname.(\arabic*)}
            }%
            \setlist[enumerate,2]{
                ref={\thetheorem.(\arabic*).(\alph*)}
            }%
        \Crefname{propositionenumi}{Proposition}{Propositions}
            \setlist[enumerate,1]{
                ref={\csname theproposition\endcsname.(\arabic*)}
            }%
            \setlist[enumerate,2]{
                ref={\theproposition.(\arabic*).(\alph*)}
            }%
        \Crefname{lemmaenumi}{Lemma}{Lemmas}
            \setlist[enumerate,1]{
                ref={\csname thelemma\endcsname.(\arabic*)}
            }%
            \setlist[enumerate,2]{
                ref={\thelemma.(\arabic*).(\alph*)}
            }%
        \Crefname{corollaryenumi}{Corollary}{Corollaries}
            \setlist[enumerate,1]{
                ref={\csname thecorollary\endcsname.(\arabic*)}
            }%
            \setlist[enumerate,2]{
                ref={\thecorollary.(\arabic*).(\alph*)}
            }%
        \Crefname{definitionenumi}{Definition}{Definitions}
            \setlist[enumerate,1]{
                ref={\csname thedefinition\endcsname.(\arabic*)}
            }%
            \setlist[enumerate,2]{
                ref={\thedefinition.(\arabic*).(\alph*)}
            }%
        \Crefname{exampleenumi}{Example}{Examples}
            \setlist[enumerate,1]{
                ref={\csname theexample\endcsname.(\arabic*)}
            }%
            \setlist[enumerate,2]{
                ref={\theexample.(\arabic*).(\alph*)}
            }%
        \AtBeginEnvironment{\env}{%
          \pushQED{\qed}%
        }
        \AtEndEnvironment{\env}{\popQED\endexample}
    \DeclareDocumentCommand{\mathcommand}{mO{0}m}{%
    \expandafter\let\csname old\string#1\endcsname=#1
    \expandafter\newcommand\csname new\string#1\endcsname[#2]{#3}
    \DeclareRobustCommand#1{%
        \ifmmode
        \expandafter\let\expandafter\next\csname new\string#1\endcsname
        \else
        \expandafter\let\expandafter\next\csname old\string#1\endcsname
        \fi
        \next
    }%
    }
    \mathcommand{\h}{\textup{-}}
    \newcommand{\tx}{\mathrm}
    \mathcommand{\b}{\mathbf}
    \newcommand{\cl}{\mathcal}
    \mathcommand{\bb}{\mathbb}
    \DeclareMathAlphabet{\bbn}{U}{bbold}{m}{n}
    \newcommand{\scr}{\mathscr}
    \mathcommand{\sf}{\mathsf}
    \mathcommand{\u}{\underline}
    \newcommand{\TODO}[1][TODO]{\textcolor{orange}{\textup{#1}}\xspace}
    \newcommand{\datetoday}{\date{\cleanlookdateon\today}}
    \newcommand{\defeq}{\mathrel{:=}}
    \mathcommand{\d}{\mathbin{;}}
    \mathcommand{\c}{\circ}
    \newcommand{\ph}[1][]{{({-}_{#1})}}
    \newcommand{\iso}{\cong}
    \renewcommand{\equiv}{\simeq}
    \newcommand{\biequiv}{\sim}
    \newcommand{\tto}{\Rightarrow}
    \newcommand{\xtto}{\xRightarrow}
    \newcommand{\ffto}{\hookrightarrow}
    \newcommand{\epito}{\twoheadrightarrow}
    \def\slashedarrowfill@#1#2#3#4#5{%
    $\m@th\thickmuskip0mu\medmuskip\thickmuskip\thinmuskip\thickmuskip
    \relax#5#1\mkern-7mu%
    \cleaders\hbox{$#5\mkern-2mu#2\mkern-2mu$}\hfill
    \mathclap{#3}\mathclap{#2}%
    \cleaders\hbox{$#5\mkern-2mu#2\mkern-2mu$}\hfill
    \mkern-7mu#4$%
    }
    \def\rightslashedarrowfill@{%
    \slashedarrowfill@\relbar\relbar\mapstochar\rightarrow}
    \newcommand\xslashedrightarrow[2][]{%
    \ext@arrow 0055{\rightslashedarrowfill@}{#1}{#2}}
    \def\leftslashedarrowfill@{%
    \slashedarrowfill@\leftarrow\relbar\mapsfromchar\relbar}
    \newcommand\xslashedleftarrow[2][]{%
    \ext@arrow 0055{\leftslashedarrowfill@}{#1}{#2}}
    \newcommand{\inv}{^{-1}}
    \newcommand{\op}{{}^\tx{op}}
    \newcommand{\lx}{\mathbin{\rhd}}
    \newcommand{\adj}{\dashv}
    \DeclareFontFamily{U}{min}{}
    \DeclareFontShape{U}{min}{m}{n}{<-> udmj30}{}
    \mathcommand{\comma}{\downarrow}
    \newsavebox{\whitecircstar}\sbox{\whitecircstar}{\kern.075em\tikz{\node[draw, circle,line width=.36pt, inner sep=0]{$*$};}\kern.075em}
    \newsavebox{\blackcircstar}\sbox{\blackcircstar}{\kern.075em\tikz{\node[fill, circle, line width=.36pt, inner sep=0, text=white]{$*$};}\kern.075em}
    \def\widebreve{\mathpalette\wide@breve}
    \def\wide@breve#1#2{\sbox\z@{$#1#2$}%
         \mathop{\vbox{\m@th\ialign{##\crcr
    \kern0.08em\brevefill#1{0.8\wd\z@}\crcr\noalign{\nointerlineskip}%
                        $\hss#1#2\hss$\crcr}}}\limits}
    \def\brevefill#1#2{$\m@th\sbox\tw@{$#1($}%
      \hss\resizebox{#2}{\wd\tw@}{\rotatebox[origin=c]{90}{\upshape(}}\hss$}
    \NewDocumentCommand{\jrule}{om}{%
        \IfNoValueTF{#1}
            {\textsc{#2}}
            {$#1$-\textsc{#2}}%
    }
    \newcommand{\Set}{{\b{Set}}}
    \newcommand{\V}{{\bb V}} % The user may override this command, which will then update `\VCat`.
    \newcommand{\CAT}{\b{CAT}}
    \newcommand{\VCAT}{\V\h\CAT}
    \newcommand{\Alg}{\b{Alg}}
    \newcommand{\ff}{fully faithful}
    \newcommand{\ffness}{full faithfulness}
    \newcommand{\lff}{locally \ff{}}
    \newcommand{\ie}{i.e.\@\xspace}
    \newcommand{\cf}{cf.\@\xspace}
    \NewDocumentCommand{\etc}{t.}{etc.\@\xspace}
    \NewDocumentCommand{\ibid}{t.}{ibid.\@\xspace}
    \NewDocumentCommand{\loccit}{t.}{loc.\ cit.\@\xspace}
\patchcmd{\beamer@sectionintoc}{\vfill}{\vskip\itemsep}{}{}
  \colorlet{colour-bg}{black!85} % Slide background.
  \definecolor{colour-primary}{HTML}{cc80ff} % Titles, headings, hyperlinks.
  \colorlet{colour-text}{black!10} % Body text.
  \colorlet{colour-subtle}{black!40} % Page numbers.
  \colorlet{colour-block-bg}{black!80} % Block background.
  \definecolor{colour-warning-bg}{HTML}{ffea80} % Placeholder slide backgrounds.
  \definecolor{colour-warning-primary}{HTML}{e08152} % Placeholder slide headings.
  \apptocmd{\frame}{}{\justifying}{}
  \newtheorem{proposition}[theorem]{\translate{Proposition}}
  \renewenvironment<>{block}[1]{%
      \begin{actionenv}#2%
        \par%
        \usebeamertemplate{block begin}}
      {\par%
        \usebeamertemplate{block end}%
      \end{actionenv}}
  \renewenvironment<>{exampleblock}[1]{%
      \begin{actionenv}#2%
          \par%
          \only<presentation>{%\usebeamerfont{block}%
            \setbeamercolor{local structure}{parent=example text}}%
          \usebeamertemplate{block example begin}}
        {\par%
          \usebeamertemplate{block example end}%
        \end{actionenv}}
\newcommand{\K}{{\cl K}}
\newcommand{\I}{{\widetilde I}}
\renewcommand{\V}{{\scr V}}
\title[Adjoint functor theorems for lax-idempotent pseudomonads]{Adjoint functor theorems\\for lax-idempotent pseudomonads}
\author{Nathanael Arkor}
\address{Department of Mathematics and Statistics, Faculty of Science, Masaryk University, Czech Republic}
\author{Ivan Di Liberti}
\address{Department of Mathematics, Stockholm University, Stockholm, Sweden}
\author{Fosco Loregian}
\address{Tallinn University of Technology, Tallinn, Estonia}
\keywords{Adjoint functor theorem, relative adjunction, lax-idempotent pseudomonad, KZ-doctrine, free cocompletion, pseudodistributive law, 2-category, formal category theory}
\subjclass[2020]{18D70,18D65,18C15,18A35,18A40,18D20,18N10}
\begin{document}

\begin{abstract}
    For each pair of lax-idempotent pseudomonads $R$ and $I$, for which $I$ is locally \ff{} and $R$ distributes over $I$, we establish an adjoint functor theorem, relating $R$-cocontinuity to adjointness relative to $I$. This provides a new perspective on the nature of adjoint functor theorems, which may be seen as methods to decompose adjointness into cocontinuity and relative adjointness. As special cases, we recover variants of the adjoint functor theorem of Freyd, the multiadjoint functor theorem of Diers, and the pluriadjoint functor theorem of Solian--Viswanathan, as well as the adjoint functor theorems for locally presentable categories. More generally, we recover enriched $\Phi$-adjoint functor theorems for weakly sound classes of weight~$\Phi$.
\end{abstract}

\maketitle

\section{Introduction}

A fundamental result in category theory asserts that left-adjoint functors are cocontinuous, \ie{} that they preserve colimits. The role of an \emph{adjoint functor theorem} is to provide sufficient conditions for the converse to hold: that is, given a cocontinuous functor $f$, to provide sufficient conditions for $f$ to be left-adjoint. This is fundamentally a question of size: right adjoint functors may be computed as left extensions, hence as certain -- potentially large -- colimits. Therefore, adjoint functor theorems typically take the form of imposing a size constraint on $f$, limiting the size of this colimit, and hence ensuring that it exists.

In this paper, we are concerned with understanding the precise nature of the interplay between adjointness, cocontinuity, and size. It is our thesis that adjoint functor theorems -- and, more generally, relative adjointness theorems -- may be seen as an inherently 2-categorical phenomenon, arising from the interaction between a pair of lax-idempotent pseudomonads on a 2-category. To justify our claim, we shall prove a general adjoint 1-cell theorem in a 2-category (\cref{DAFT}), and show that it captures many instances of adjoint functor theorem present in the literature (\cref{adjoint-iff-cocontinuous-plus-admissible,AFTs-for-weights,relative-AFT,presentable-AFT}).

There are two distinct aspects to our work.
\begin{enumerate}
    \item From a purely formal, 2-categorical perspective, our result is an extension of the work of \textcite{walker2019distributive} on the interaction between \emph{admissibility} for lax-idempotent pseudomonads, in the sense of \textcite{bunge1999bicomma}, and pseudodistributive laws. While \citeauthor{walker2019distributive} studies the setting in which a lax-idempotent pseudomonad $I$ lifts to the 2-category of pseudoalgebras for an \emph{arbitrary} pseudomonad, we are interested in the case where both pseudomonads are lax-idempotent. Under such an assumption, it is possible to sharply characterise the property of being admissible with respect to $I$ (\cref{DAFT}).
    \item From the perspective of ordinary and enriched category theory, our result ties together various notions of adjoint functor theorem, providing a conceptual understanding that sheds new light on a classical result. In particular, we explain how size constraints may be seen as relative adjointness conditions~\cite{ulmer1968properties}, and that adjoint functor theorems thereby decompose adjointness properties into relative adjointness and cocontinuity properties (\cref{overview-of-AFTs}).
\end{enumerate}

We shall begin by explaining precisely what we mean by this latter statement, before recalling the notion of lax-idempotent pseudomonad and explaining the fundamental role it plays in the nature of adjointness.

\subsection{A brief overview of adjoint functor theorems}
\label{overview-of-AFTs}

Recall that a category is \emph{small-cocomplete} if it admits all small colimits, \ie{} colimits indexed by small categories; and that a functor is \emph{small-cocontinuous} if it preserves all small colimits. We say that a functor $f \colon A \to B$ satisfies the \emph{solution set condition}\footnotemark{} if,
\footnotetext{With respect to Freyd's original terminology, who was concerned with establishing right adjointness rather than left adjointness, this would be called the \emph{co-solution set condition}.}%
for every object $b \in B$, the presheaf $B(f{-}, b) \colon A\op \to \Set$ is \emph{weakly multirepresentable}\footnotemark{}, \ie{} if there exists a small family of objects $(a_i)_{i \in I}$ in $A$ and an epimorphism ${\coprod_{i \in I} A({-}, a_i) \epito B(f{-}, b)}$ in $[A\op, \Set]$.%
\footnotetext{Such presheaves were called \emph{petty} by \citeauthor{freyd1968several}~\cite[\S I]{freyd1968several}; we prefer the compositional prefixes \emph{weak} of \textcite{kainen1971weak} and \emph{multi-} of \textcite{diers1980categories}.}%

The classical adjoint functor theorem, which appeared not long after the introduction of adjoint functors themselves~\cite{kan1958adjoint}, is stated as follows.
\begin{theorem*}[Freyd]
    Let $f$ be a functor from a locally small and small-cocomplete category. The following are equivalent.
    \begin{enumerate}
        \item $f$ is left-adjoint.
        \item $f$ satisfies the solution set condition and is small-cocontinuous.
    \end{enumerate}
\end{theorem*}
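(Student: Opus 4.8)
The plan is to prove the two implications separately; $(1)\Rightarrow(2)$ is routine, and $(2)\Rightarrow(1)$ is essentially Freyd's original argument, which I would organise through comma categories. For $(1)\Rightarrow(2)$: if $f$ admits a right adjoint $g$, then for each $b\in B$ there is a natural isomorphism $B(f{-},b)\iso A({-},gb)$, so $B(f{-},b)$ is representable and hence \emph{a fortiori} weakly multirepresentable — take the one-element family $(gb)$ together with this isomorphism, which is in particular an epimorphism. Small-cocontinuity is the usual Yoneda computation: for a small diagram $(a_j)_j$ in $A$ and any $b\in B$ one has $B\big(f(\operatorname{colim}_j a_j),\,b\big)\iso \lim_j B(f a_j,b)\iso B\big(\operatorname{colim}_j f a_j,\,b\big)$, natural in $b$, so $f(\operatorname{colim}_j a_j)\iso\operatorname{colim}_j f a_j$.

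For $(2)\Rightarrow(1)$ I would fix $b\in B$ and work in the comma category $(f\comma b)$, whose objects are pairs $(a,\phi)$ with $\phi\colon fa\to b$ and whose morphisms $(a,\phi)\to(a',\phi')$ are the $h\colon a\to a'$ with $\phi'\c fh=\phi$. Two facts. First, the projection $(f\comma b)\to A$ creates small colimits: given a small diagram $(a_j,\phi_j)_j$, form $a\defeq\operatorname{colim}_j a_j$ in $A$, use small-cocontinuity to get $fa\iso\operatorname{colim}_j f a_j$, and let $\phi\colon fa\to b$ be the map induced by the cocone $(\phi_j)_j$; then $(a,\phi)$ is the colimit. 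Hence $(f\comma b)$ has small colimits, and it is locally small because its hom-sets are subsets of those of $A$. Second, the solution set condition says precisely that $(f\comma b)$ has a small weakly terminal family: the Yoneda lemma identifies the restriction of the epimorphism $\coprod_i A({-},a_i)\epito B(f{-},b)$ to the $i$-th summand with an arrow $\psi_i\colon fa_i\to b$, and pointwise surjectivity of that epimorphism then says that every $\phi\colon fa\to b$ factors as $\psi_i\c fh$ for some $i$ and some $h\colon a\to a_i$ — \ie{} every object of $(f\comma b)$ admits a morphism to some $(a_i,\psi_i)$.

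Everything then reduces to the dual of Freyd's core lemma: a locally small category with small colimits and a small weakly terminal family has a terminal object. I would prove this by putting $w\defeq\coprod_i(a_i,\psi_i)$, which is weakly terminal as a single object, and then taking the joint coequaliser $q\colon w\to t$ of all endomorphisms of $w$ — which is a legitimate small colimit because local smallness makes $\operatorname{End}(w)$ a set. Then $q u=q$ for every endomorphism $u$ of $w$; the object $t$ remains weakly terminal, since $w$ is; and given $g,h\colon x\to t$ one forms their coequaliser $p\colon t\to v$, uses weak terminality of $w$ to choose $k\colon v\to w$, and observes that $q\c(k\c p\c q)=q$, so $q\c k\c p=\mathrm{id}_t$ because $q$ is epic — hence $p$ is simultaneously a split monomorphism and a coequaliser, so an isomorphism, forcing $g=h$. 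Applying this lemma to $(f\comma b)$ produces a terminal object $(gb,\varepsilon_b\colon f(gb)\to b)$, which is exactly a universal arrow from $f$ to $b$; the usual parameter theorem assembles the objects $gb$ into a functor $g\colon B\to A$ with $f\dashv g$.

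The only real work is the core lemma — specifically its uniqueness clause, the "split monomorphism and epimorphism, hence isomorphism" chase — everything else being bookkeeping; and the hypothesis that is easiest to overlook is local smallness of $A$, which is exactly what guarantees the defining coequaliser exists as a small colimit. From the perspective of this paper I would expect the whole argument to be absorbed into the general adjoint $1$-cell theorem, with "small-cocontinuous" and "satisfies the solution set condition" becoming, respectively, cocontinuity and admissibility for a suitable pair of lax-idempotent pseudomonads.
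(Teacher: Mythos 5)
Your argument is correct, and it is the standard one; but note that the paper itself offers no proof of this statement. It is quoted as classical background, with the attribution to \textcite{freyd1964abelian} and \cite[Theorem~V.6.2]{maclane1998categories}, and the paper's own machinery is only claimed to recover \emph{variants} of it. Your proof -- left adjoints are representing objects hence weakly multirepresentable and preserve colimits; conversely, small-cocontinuity makes the projection $(f \comma b) \to A$ create small colimits, the solution set is a small weakly terminal family in $(f \comma b)$, and the dual of Freyd's initial-object lemma (coequalise all endomorphisms of the coproduct $w$ of the family, then the split-mono-plus-epi chase for uniqueness) produces a terminal object, \ie{} a universal arrow -- is exactly Freyd's original route, and all the steps check out, including your correct identification of local smallness of $A$ as the hypothesis that makes $\operatorname{End}(w)$ a set. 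Your closing expectation that the theorem is absorbed into the general adjoint 1-cell theorem should, however, be tempered: the paper is explicit that what \cref{AFT} yields on the nose is Ulmer's formulation (small-\emph{admissibility} rather than the solution set condition), and that the two size conditions are reconciled only via \cite[Lemma~12]{ulmer1971adjoint} for small-cocontinuous functors from small-cocomplete categories; the direct specialisation closest to Freyd's statement is \cref{well-cocomplete-AFT}, which carries marginally stronger hypotheses (preservation of large cointersections of regular epimorphisms). So a direct elementary proof such as yours is genuinely needed if one wants Freyd's theorem exactly as stated, rather than one of the paper's variants.
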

The theorem essentially originates in \cite[84]{freyd1964abelian}, though the general formulation appears, attributed to Freyd, as \cite[Theorem~V.6.2]{maclane1998categories}. Here, the size constraint takes the form of the solution set condition.

While the theorem of \citeauthor{freyd1964abelian} is perhaps the most well known, there is another that will be most appropriate for our purposes. Call a functor $f \colon A \to B$ \emph{small-admissible} if, for every object $b \in B$, the presheaf $B(f{-}, b) \colon A\op \to \Set$ is \emph{small}, \ie{} a small colimit of representables.

\begin{theorem*}[Ulmer]
    Let $f$ be a functor from a locally small and small-cocomplete category. The following are equivalent.
    \begin{enumerate}
        \item $f$ is left-adjoint.
        \item $f$ is small-admissible and small-cocontinuous.
    \end{enumerate}
\end{theorem*}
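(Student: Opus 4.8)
The plan is to establish the non-trivial implication $(2) \Rightarrow (1)$; the converse is routine, since a left adjoint preserves every colimit that exists in its domain and is therefore small-cocontinuous when $A$ is small-cocomplete, while $B(f{-}, b) \cong A({-}, gb)$ for a right adjoint $g$ exhibits $B(f{-}, b)$ as representable, hence small. So the whole content is that, given $A$ small-cocomplete and $f$ small-cocontinuous, smallness of the presheaves $B(f{-}, b)$ forces them to be \emph{representable} --- and it is standard that $f$ is a left adjoint exactly when each $B(f{-}, b) \colon A\op \to \Set$ is representable.

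Fix $b \in B$ and write $P \defeq B(f{-}, b)$. Since $P$ is small, I would begin by choosing a presentation $\theta \colon \operatorname{colim}_{i \in \mathcal{E}} A({-}, D_i) \xrightarrow{\sim} P$ with $\mathcal{E}$ small and $D \colon \mathcal{E} \to A$ a diagram, and setting $L \defeq \operatorname{colim}_{i} D_i$ in $A$ (which exists by small-cocompleteness), with colimit cocone $\mu_i \colon D_i \to L$. Composing the colimit comparison map $\operatorname{colim}_i A({-}, D_i) \to A({-}, L)$ with $\theta\inv$ yields a natural transformation $s \colon P \to A({-}, L)$. The key step is to produce a retraction of $s$, and this is where small-cocontinuity enters: each leg $A({-}, D_i) \to P$ of $\theta$ transposes under the Yoneda lemma to a map $\xi_i \colon f D_i \to b$, the family $(\xi_i)_i$ is a cocone under $f \circ D$, and --- since $f$ preserves the small colimit defining $L$, so that $f L \cong \operatorname{colim}_i f D_i$ in $B$ --- the universal property of $fL$ expresses this cocone as a single arrow $\varepsilon \colon f L \to b$, equivalently a natural transformation $\Psi \colon A({-}, L) \to P$. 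A short diagram chase on the legs of the colimit shows $\Psi \circ s = \mathrm{id}_P$.

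Thus $P$ is a retract of the representable $A({-}, L)$. The associated idempotent $s \circ \Psi$ on $A({-}, L)$ is of the form $A({-}, e)$ for an idempotent $e \colon L \to L$, because the Yoneda embedding is \ff{}; since $A$ has coequalisers, idempotents split in $A$, so $e = \iota \circ \rho$ with $\rho \colon L \to L'$, $\iota \colon L' \to L$ and $\rho \circ \iota = \mathrm{id}_{L'}$. The Yoneda embedding carries this splitting to a splitting of $s \circ \Psi$, so by uniqueness of splittings $P \cong A({-}, L')$ is representable. Letting $b$ vary and invoking the standard criterion recalled above then produces the right adjoint to $f$.

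I expect the main obstacle to be the production of the counit $\varepsilon \colon f L \to b$: one must notice that the transposed maps $\xi_i$ form a cocone and that small-cocontinuity of $f$ is exactly what is needed to factor this cocone through the colimit $fL$ --- without cocontinuity one only gets that $P$ embeds into $A({-}, L)$ as a subpresheaf, not as a retract. The second slightly delicate point is the closing move: a priori $s$ need not be an isomorphism, and it is the cocompleteness of $A$ (via split idempotents) that promotes ``retract of a representable'' to ``representable''. Everything else --- the naturality verifications, and the passage from a small colimit of representables to an honest colimit in $A$ --- is bookkeeping.
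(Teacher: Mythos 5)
Your argument is correct, but it takes a genuinely different route from the paper. The paper does not prove this statement directly: it is quoted as a classical result of Ulmer and later \emph{recovered} as the instance of \cref{DAFT} (equivalently \cref{AFT}) with $\K = \CAT$, $R$ the free small-cocompletion pseudomonad, and $I$ the identity --- the first row of the table in \cref{AFTs-for-weights}. In that formal argument the right adjoint is produced by \cref{reflection-compose-adjunction} via \cref{admissibility-for-composite} as the composite $\alpha \circ f^{R} \circ \rho_b$, which concretely is $b \mapsto B(f{-},b) \star 1_A$; for a presentation $B(f{-},b) \cong \operatorname{colim}_i A({-},D_i)$ this is exactly your colimit $L = \operatorname{colim}_i D_i$, but representability of $B(f{-},b)$ comes for free from the formal adjunction rather than being extracted by hand. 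Your proof is the elementary, presheaf-level version: transpose the legs of the presentation to a cocone $(\xi_i \colon fD_i \to b)$, use small-cocontinuity to factor it through $fL \cong \operatorname{colim}_i fD_i$, and conclude that $B(f{-},b)$ is a retract of $A({-},L)$, hence representable because idempotents split in the cocomplete category $A$. The verifications you sketch ($\Psi \circ s = \mathrm{id}$ on legs, the cocone condition for the $\xi_i$, uniqueness of idempotent splittings) all go through. Two minor remarks: the closing idempotent-splitting step is slightly more than is needed, since the formal computation shows $B(f{-},b) \cong A({-},L)$ on the nose (so $L' \cong L$), though the retract argument is perfectly valid and is the form that survives under weaker hypotheses (\cf{} the solution-set variant); and what you gain over the paper's route is self-containedness and concreteness, while what you lose is the uniform generalisation to arbitrary $\Phi$, to enrichment, and to abstract $2$-categories that the pseudomonadic formulation provides.
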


The theorem essentially originates in \cite[(13)]{ulmer1971adjoint}. Here, the size constraint takes the form of the small-admissibility condition.

The distinction between the size constraints in the two forms of the adjoint functor theorem stated above is slight. Every small-admissible functor satisfies the solution set condition, since every small colimit is a quotient of a small coproduct. Conversely, while every epimorphism in a presheaf category is regular, so that each weakly multirepresentable presheaf $q$ is the coequaliser of a pair of parallel morphisms $p \rightrightarrows \coprod_{i \in I} A({-}, a_i)$, it is not necessarily the case that $q$ is small unless $p$ is small. Therefore, a functor satisfying the solution set condition is not necessarily small-admissible. However, the two properties coincide under weak assumptions: in particular, a small-cocontinuous functor from a small-cocomplete category satisfies the solution set condition if and only if it is small-admissible~\cite[Lemma~12]{ulmer1971adjoint}. Consequently, for the purposes of the adjoint functor theorem, we may view the two conditions as being equivalent. Henceforth, we shall focus on \citeauthor{ulmer1971adjoint}'s formulation of the adjoint functor theorem, which we shall see admits a particularly elegant understanding.

We may understand the small-admissibility condition as a weakening of the notion of adjointness. First, observe that every functor $f \colon A \to B$ between locally small categories is left-adjoint \emph{relative} to the Yoneda embedding (in the sense of \cite[\S2]{ulmer1968properties}), since the Yoneda lemma exhibits an isomorphism $B(f a, b) \iso [A\op, \Set](A({-}, a), B(f{-}, b))$, natural in $a \in A$ and $b \in B$.
% https://q.uiver.app/?q=WzAsMyxbMCwxLCJBIl0sWzEsMCwiQiJdLFsyLDEsIltBXFxvcCwgXFxTZXRdIl0sWzAsMSwiZiJdLFsxLDIsIkIoZnstfV8yLCB7LX1fMSkiXSxbMCwyLCJBKHstfV8yLCB7LX1fMSkiLDIseyJzdHlsZSI6eyJ0YWlsIjp7Im5hbWUiOiJob29rIiwic2lkZSI6InRvcCJ9fX1dLFszLDQsIiIsMCx7Im9mZnNldCI6MiwibGV2ZWwiOjEsInN0eWxlIjp7Im5hbWUiOiJhZGp1bmN0aW9uIn19XV0=
\[\begin{tikzcd}
	& B \\
	A && {[A\op, \Set]}
	\arrow[""{name=0, anchor=center, inner sep=0}, "f", from=2-1, to=1-2]
	\arrow[""{name=1, anchor=center, inner sep=0}, "{B(f{-}_2, {-}_1)}", from=1-2, to=2-3]
	\arrow["{A({-}_2, {-}_1)}"', hook, from=2-1, to=2-3]
	\arrow["\dashv"{anchor=center, rotate=2}, shift right=2, draw=none, from=0, to=1]
\end{tikzcd}\]
Thus, adjointness relative to the Yoneda embedding is a tautology. Conversely, adjointness relative to the identity functor on $A$ (equivalently to the embedding of $A$ into the full subcategory of $[A\op, \Set]$ spanned by the representable presheaves) is simply ordinary adjointness. Consequently, we may consider a spectrum of notions of adjointness, mediated by adjointness relative to the inclusion of $A$ into some full subcategory $\widetilde A$ of its category of presheaves $[A\op, \Set]$.
% https://q.uiver.app/?q=WzAsMyxbMCwxLCJBIl0sWzEsMCwiQiJdLFsyLDEsIlxcd2lkZXRpbGRlIEEiXSxbMCwxLCJmIl0sWzEsMiwiQihmey19XzIsIHstfV8xKSIsMCx7InN0eWxlIjp7ImJvZHkiOnsibmFtZSI6ImRhc2hlZCJ9fX1dLFswLDIsIkEoey19XzIsIHstfV8xKSIsMix7InN0eWxlIjp7InRhaWwiOnsibmFtZSI6Imhvb2siLCJzaWRlIjoidG9wIn19fV0sWzMsNCwiIiwwLHsib2Zmc2V0IjoyLCJsZXZlbCI6MSwic3R5bGUiOnsibmFtZSI6ImFkanVuY3Rpb24ifX1dXQ==
\[\begin{tikzcd}
	& B \\
	A && {\widetilde A}
	\arrow[""{name=0, anchor=center, inner sep=0}, "f", from=2-1, to=1-2]
	\arrow[""{name=1, anchor=center, inner sep=0}, "{B(f{-}_2, {-}_1)}", dashed, from=1-2, to=2-3]
	\arrow["{A({-}_2, {-}_1)}"', hook, from=2-1, to=2-3]
	\arrow["\dashv"{anchor=center}, shift right=2, draw=none, from=0, to=1]
\end{tikzcd}\]
At one end of the spectrum, where the notion of adjointness is strongest, we have $\widetilde A = A$, recovering ordinary adjointness. At the other end of the spectrum, where the notion of adjointness is weakest, we have $\widetilde A = [A\op, \Set]$, which is a triviality. Somewhere in-between, taking $\widetilde A = [A\op, \Set]_\tx{small}$ to be the full subcategory of small presheaves on $A$, we recover the definition of small-admissibility.\footnotemark{}
\footnotetext{Similarly, taking $\widetilde A$ to be the full subcategory of weakly multirepresentable presheaves on $A$, we recover the solution set condition (\cf{}~\cite[29]{lack2023virtual}).}

From this perspective, we may see the adjoint functor theorem as describing a way to decompose adjointness into two weaker properties -- relative adjointness and cocontinuity -- which may therefore be seen as being complementary to one another. It is consequently natural to wonder whether it might be possible to decompose adjointness in other ways, for instance, by strengthening the notion of relative adjointness, whilst simultaneously weakening the notion of cocontinuity.

To answer this question, we observe that both small-admissibility and small-cocontinuity are in some sense parameterised by a class of colimits: namely, the small colimits. For a fixed class of weights $\Psi$, small-admissibility and small-cocontinuity both admit natural generalisations from small colimits to $\Psi$-weighted colimits: \emph{$\Psi$-admissibility} is adjointness relative to the cocompletion $\widetilde A \defeq [A\op, \Set]_\Psi$ of $A$ under $\Psi$-weighted colimits; while $\Psi$-cocontinuity is the preservation of $\Psi$-weighted colimits. When $\Psi = \varnothing$, $\Psi$-admissibility is adjointness and $\Psi$-cocontinuity is trivial; whereas when $\Psi$ is the class of small weights, $\Psi$-admissibility is small-admissibility and $\Psi$-cocontinuity is small-cocontinuity. This might lead us to conjecture that it is possible to decompose adjointness not simply with respect to small colimits, but with respect to any class of weights. This is indeed the case.
\begin{theorem*}[Tholen]
    Let $f$ be a functor from a locally small and $\Psi$-cocomplete category. The following are equivalent.
    \begin{enumerate}
        \item $f$ is left-adjoint.
        \item $f$ is $\Psi$-admissible and $\Psi$-cocontinuous.
    \end{enumerate}
\end{theorem*}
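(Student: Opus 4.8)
The plan is to read the two conditions through the relative-adjointness lens developed above. The implication $(1)\Rightarrow(2)$ is the easy half: a left adjoint preserves all colimits, hence in particular $\Psi$-weighted ones, and if $f\dashv g$ then $B(f{-},b)\iso A({-},gb)$ is representable, so it lies in the $\Psi$-cocompletion $\widetilde A\defeq[A\op,\Set]_\Psi$ (which contains the representables), witnessing $\Psi$-admissibility. For the converse, I would first unwind $\Psi$-admissibility: it says precisely that $B(f{-},{-})\colon B\to[A\op,\Set]$ factors through $\widetilde A$, say as $\tilde f\colon B\to\widetilde A$, whereupon the Yoneda lemma supplies a natural isomorphism $B(fa,b)\iso\widetilde A(\yo a,\tilde f b)$ --- that is, $f$ is a left adjoint relative to the corestricted Yoneda embedding $\yo\colon A\to\widetilde A$, with relative right adjoint $\tilde f$. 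The goal is then to upgrade this to a genuine adjunction, and since $\yo$ is fully faithful it suffices to show that $\tilde f$ factors, up to isomorphism, through $\yo$ --- equivalently, that each presheaf $\tilde f b=B(f{-},b)$ is representable --- for then $B(fa,b)\iso\widetilde A(\yo a,\yo hb)\iso A(a,hb)$ naturally in $a$ and $b$, where $hb$ denotes the representing object, exhibiting $f\dashv h$.

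The candidate for $h$ is supplied by $\Psi$-cocompleteness of $A$, which makes $\yo\colon A\to\widetilde A$ a \emph{reflective} embedding: the reflector $L\colon\widetilde A\to A$, the left Kan extension of $\tx{id}_A$ along $\yo$, exists (each of its values being a $\Psi$-colimit in $A$), is left adjoint to $\yo$, and satisfies $L\yo\iso\tx{id}_A$; concretely, $L$ sends a presheaf in $\widetilde A$ to its $\Psi$-colimit in $A$. I would take $h\defeq L\tilde f$, so that $hb$ is the $\Psi$-colimit in $A$ of the presentation witnessing $B(f{-},b)\in\widetilde A$.

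It then remains to prove $\tilde f b\iso\yo hb$, and this is where $\Psi$-cocontinuity does its work. Present $\tilde f b$ as $W\star\yo d$ for some weight $W$ in $\Psi$ and diagram $d\colon\cl D\to A$, so that $hb\iso W\star d$ in $A$. Read through the (pointwise) universal property of the colimit, the presentation isomorphism $\theta\colon W\star\yo d\to\tilde f b$ is a $W$-cocylinder on $fd$ with apex $b$; because $f$ preserves the colimit $W\star d$, this cocylinder factors uniquely through a morphism $\bar\theta\colon fhb\to b$, which by Yoneda names a map of presheaves $\Theta\colon\yo hb\to\tilde f b$, given by $\alpha\mapsto\bar\theta\circ f\alpha$. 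I expect $\Theta$ to be invertible, and would establish this in two steps. First, composing $\Theta$ with the canonical comparison $W\star\yo d\to\yo(W\star d)\iso\yo hb$ recovers $\theta$ --- a short unwinding of cocylinders --- so $\Theta$ is a split epimorphism. Second, naturality of the reflection unit $\eta\colon\tx{id}_{\widetilde A}\tto\yo L$ at $\Theta$ reads $\eta_{\tilde f b}\circ\Theta=\yo L\Theta\circ\eta_{\yo hb}$; here $\eta_{\yo hb}$ is invertible because representables are local objects, and $L\Theta$ is invertible by a short computation (using that $L$ preserves $\Psi$-colimits and inverts the unit), so $\eta_{\tilde f b}\circ\Theta$ is invertible and $\Theta$ is a split monomorphism. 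A map that is both a split epimorphism and a split monomorphism is an isomorphism, so each $\tilde f b$ is representable; performing this uniformly in $b$, the units $\eta_{\tilde f b}$ become invertible and natural, so $B(f{-},{-})=\tilde f\iso\yo\circ h$ and $f\dashv h$, which is $(1)$.

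I expect the main obstacle to be precisely this last step: extracting genuine invertibility of the comparison $\Theta$, rather than merely a split epimorphism (or yet another relative adjunction). This is exactly where $\Psi$-cocontinuity is consumed, and pinning it down forces one to play the colimit presentation off against the reflection $L\dashv\yo$; everything before it is essentially bookkeeping. Finally, I would expect this theorem to be an instance of the paper's general $2$-categorical adjoint $1$-cell theorem, with $\widetilde A$, its reflector, and the relative-adjointness reformulation all arising from a pseudodistributive law on $\Cat$ between two lax-idempotent pseudomonads built from $\Psi$-weighted colimits --- the argument above being the concrete shadow of that formal statement.
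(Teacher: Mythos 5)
Your argument is essentially correct, but it is not the paper's route: the paper never proves Tholen's theorem directly. It is stated in the introduction as motivation and later (\cref{adjoint-iff-cocontinuous-plus-admissible}) derived as the instance of \cref{AFT} --- hence of \cref{DAFT} with $I$ the identity pseudomonad --- obtained by taking $\K = \CAT$ and $R = \overline\Psi$ the free $\Psi$-cocompletion pseudomonad, under which $\Psi$-cocompleteness, $\Psi$-cocontinuity and $\Psi$-admissibility become pseudoalgebra structure, the pseudomorphism condition, and left-adjointness of $\overline\Psi f$ respectively (\cref{admissibility-as-relative-adjointness}). The engine there is \cref{reflection-compose-adjunction}: one composes the reflection $L \dashv \yo \colon A \to \widetilde A$ coming from cocompleteness with the adjunction $\overline\Psi f \dashv f^{\overline\Psi}$ coming from admissibility, and the invertibility hypothesis on $\ell'\eta r'$ is exactly where $\Psi$-cocontinuity is consumed. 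The resulting right adjoint $L_A \circ f^{\overline\Psi} \circ \yo_B$ is your $h = L\widetilde f$, so the two proofs construct the same functor; the difference is that the paper checks the triangle identities once and for all at the level of $2$-cells, whereas you check representability of each $\widetilde f b$ by playing the colimit presentation off against the reflection. Your concrete argument buys something the formal one does not (as the paper itself concedes in \cref{not-all-colimits}): it uses $\Psi$-cocompleteness of the domain only, matching the theorem as stated, whereas \cref{DAFT} asks both objects to be $R$-cocomplete. What the paper's route buys is uniformity over the other rows of the table in \cref{AFTs-for-weights} and over arbitrary base $2$-categories.

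One step deserves repair. You present $\widetilde f b$ as a single colimit $W \star \yo d$ with $W \in \Psi$, but $[A\op,\Set]_\Psi$ is the \emph{closure} of the representables under $\Psi$-weighted colimits, which in general is built by iteration, so such a one-step presentation need not exist for $W$ literally in $\Psi$. The standard fix is to pass to the saturation $\Psi^*$: every object $P$ of the closure is, by the co-Yoneda lemma, the $P$-weighted colimit of $\yo$ with $P$ a $\Psi^*$-weight, and a $\Psi$-cocontinuous functor from a $\Psi$-cocomplete category preserves $\Psi^*$-weighted colimits. With that lemma invoked, your splitting argument for $\Theta$ goes through as written.
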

The theorem essentially originates in \cite[Theorem~2.3]{tholen1984pro}. With regards to the discussion above, this theorem is conceptually enlightening, but it is also practical. For many classes of weights $\Psi$, notions of $\Psi$-adjointness have been studied in the literature: for instance, taking $\Psi$ to be the class of weights indexed by finite discrete categories, $\Psi$-admissibility is the \emph{multiadjointness} of \textcite{diers1977categories}; taking $\Psi$ to be the class indexed by finite categories, $\Psi$-admissibility is the \emph{pluriadjointness} of \textcite{solian1990pluri}; and taking $\Psi$ to be the class of absolute weights, $\Psi$-admissibility is the \emph{semiadjointness} of \textcite{hayashi1985adjunction} (\cf{}~\cite{hoofman1995remark}).

Given that adjointness is, itself, an admissibility property with respect to a class of weights $\Phi$ (namely, the empty class $\Phi = \varnothing$), we might ask whether it is possible to generalise \citeauthor{tholen1984pro}'s theorem further, and formulate a \emph{$\Phi$-adjoint functor theorem}: in other words, whether we might decompose \emph{adjointness with respect to $\Phi$} into a weaker adjointness property together with a cocontinuity property. This is indeed possible. Suppose that $\Psi$ and $\Phi$ are small classes of weights for which free $\Phi$-cocompletions preserve $\Psi$-cocompleteness. In this context, the following $\Phi$-adjoint functor theorem holds.
\begin{theorem*}[\cref{Phi-AFT}]
    Let $f$ be a functor between locally small and $\Psi$-cocomplete categories. The following are equivalent.
    \begin{enumerate}
        \item $f$ is $\Phi$-admissible.
        \item $f$ is $(\Phi \cup \Psi)$-admissible and $\Psi$-cocontinuous.
    \end{enumerate}
\end{theorem*}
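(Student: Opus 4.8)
The plan is to deduce the statement from the general $2$-categorical result, \cref{DAFT}, applied to the pair of lax-idempotent pseudomonads $R = P_\Psi$, $I = P_\Phi$ of $\Psi$- and $\Phi$-cocompletion, where I write $P_\Xi A$ for $[A\op,\Set]_\Xi$ with unit $y_\Xi \colon A \to P_\Xi A$. The translation rests on the following equivalent reformulations of $\Xi$-admissibility, all immediate from the Yoneda lemma and the description $P_\Xi f = \mathrm{Lan}_{f\op}$ (corestricted) of the action on $1$-cells: $f$ is $\Xi$-admissible $\iff$ the functor $B(f{-},{-}) \colon B \to [A\op,\Set]$ corestricts along $P_\Xi A \hookrightarrow [A\op,\Set]$ $\iff$ $f$ is a left adjoint relative to $y_\Xi$ $\iff$ $P_\Xi f \colon P_\Xi A \to P_\Xi B$ has a right adjoint (the candidate $[f\op,\Set]$ corestricting to $P_\Xi B \to P_\Xi A$ exactly when each $B(f{-},b)$ lies in $P_\Xi A$). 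I would then record the inputs supplied by the hypothesis that $\Psi$ distributes over $\Phi$, which are part of the precise account of $\Phi \circ \Psi$ promised above: $P_\Phi$ restricts to a pseudomonad on $\Psi$-cocomplete categories and $\Psi$-cocontinuous functors; $y_\Phi \colon C \to P_\Phi C$ is $\Psi$-cocontinuous whenever $C$ is $\Psi$-cocomplete; and $P_{\Phi\circ\Psi}C \equiv P_\Phi P_\Psi C$ compatibly with the embeddings into $[C\op,\Set]$, with $y_{\Phi\circ\Psi} \iso y_\Phi^{P_\Psi C} \circ y_\Psi^C$. As $P_\Phi$ preserves full faithfulness, $P_\Phi y_\Psi^C \colon P_\Phi C \to P_\Phi P_\Psi C$ is fully faithful; and applying the pseudofunctor $P_\Phi$ to the adjunction $s_C \adj y_\Psi^C$, where $s_C \colon P_\Psi C \to C$ is the left adjoint of the unit carried by a $\Psi$-cocomplete $C$ (its $P_\Psi$-pseudoalgebra structure), exhibits $P_\Phi C$ as a \emph{reflective} subcategory of $P_{\Phi\circ\Psi}C$ with reflector $P_\Phi s_C$.

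For $(1) \Rightarrow (2)$, $\Phi$-admissibility forces $(\Phi\circ\Psi)$-admissibility with no appeal to cocompleteness: postcomposing a relative right adjoint $g \colon B \to P_\Phi A$ of $f$ with the fully faithful $P_\Phi y_\Psi^A$ produces a relative right adjoint of $f$ along $P_\Phi y_\Psi^A \circ y_\Phi^A \iso y_{\Phi\circ\Psi}$. And since $y_\Phi^A$ is $\Psi$-cocontinuous and $A$ is $\Psi$-cocomplete, a left adjoint relative to $y_\Phi^A$ preserves all $\Psi$-colimits, by the usual hom-set computation for relative adjoints; so $f$ is $\Psi$-cocontinuous.

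The substance is $(2) \Rightarrow (1)$, which I expect to be the main obstacle. Being $\Psi$-cocontinuous, $f$ is a pseudomorphism of $P_\Psi$-pseudoalgebras, $s_B \circ P_\Psi f \iso f \circ s_A$. From $(\Phi\circ\Psi)$-admissibility, $P_{\Phi\circ\Psi}f \equiv P_\Phi P_\Psi f$ has a right adjoint $\rho$. The square with horizontal edges $P_\Phi f$, $P_\Phi P_\Psi f$ and vertical edges the reflective embeddings $P_\Phi y_\Psi$ commutes up to isomorphism by naturality of $y_\Psi$, so it suffices to show that $\rho$ carries the reflective subcategory $P_\Phi B$ into $P_\Phi A$: then $\rho$ restricts to a functor $P_\Phi B \to P_\Phi A$ which a short fully-faithfulness calculation identifies as right adjoint to $P_\Phi f$, whence $f$ is $\Phi$-admissible. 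Writing $\Sigma_C$ for the class of morphisms of $P_\Phi P_\Psi C$ inverted by the reflector $P_\Phi s_C$ — so that $P_\Phi C$ is precisely the full subcategory of $\Sigma_C$-local objects — the right adjoint $\rho$ sends $\Sigma_B$-local objects to $\Sigma_A$-local objects provided $P_\Phi P_\Psi f$ maps $\Sigma_A$ into $\Sigma_B$; and this is exactly where $\Psi$-cocontinuity is consumed, since for $\sigma \in \Sigma_A$
\[
  P_\Phi s_B\bigl(P_\Phi P_\Psi f(\sigma)\bigr)
  \;\iso\; P_\Phi\bigl(s_B \circ P_\Psi f\bigr)(\sigma)
  \;\iso\; P_\Phi\bigl(f \circ s_A\bigr)(\sigma)
  \;\iso\; P_\Phi f\bigl(P_\Phi s_A(\sigma)\bigr)
\]
is an isomorphism because $P_\Phi s_A$ inverts $\Sigma_A$ by definition.

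The genuine difficulty lies not in this formal skeleton but in the two structural facts it rests on: the identification of $\Psi$-cocontinuity of $f$ with the compatibility of $P_\Phi P_\Psi f$ with the two localizations $P_\Phi(-) \hookrightarrow P_{\Phi\circ\Psi}(-)$, and the whole package relating the distributive law of $\Psi$ over $\Phi$ to the composite class $\Phi\circ\Psi$, the equivalence $P_{\Phi\circ\Psi} \equiv P_\Phi P_\Psi$, and the reflectivity of $P_\Phi C$ inside $P_{\Phi\circ\Psi}C$ for $\Psi$-cocomplete $C$. Once these are in place the argument is purely formal, which is why the theorem is an instance of \cref{DAFT}.
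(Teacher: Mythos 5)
Your argument is correct, but it is not what the paper does: despite announcing that you will ``deduce the statement from \cref{DAFT}'', you in fact re-prove the relevant special case of \cref{DAFT} directly inside presheaf categories. The paper's route is to prove the purely formal 2-categorical statement first and then instantiate it at $\K = \CAT$, $R = \overline\Psi$, $I = \overline\Phi$ (\cref{AFTs-for-weights}); the formal proof decomposes exactly as yours does (the easy admissibility implication, the fact that relative left adjoints inherit $\Psi$-cocontinuity from the unit, and a hard step extracting a right adjoint for $P_\Phi f$ from one for $P_\Phi P_\Psi f$ using the reflection $P_\Phi y_\Psi \colon P_\Phi C \to P_\Phi P_\Psi C$), but it executes the hard step differently. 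The paper proves a general lemma (\cref{reflection-compose-adjunction}) showing that composing a reflection with an adjunction yields an adjunction $I\beta \circ IRf \dashv I\alpha \circ f^{IR} \circ I\rho_b$, verified by triangle identities, and gets the easy direction from the fact that $I$-admissibility of $f$ implies $I$-admissibility of $Rf$; you instead characterise $P_\Phi C$ as the $\Sigma_C$-local objects of $P_\Phi P_\Psi C$ and show the right adjoint $\rho$ restricts, which is where your $\Psi$-cocontinuity enters (via $P_\Phi P_\Psi f(\Sigma_A) \subseteq \Sigma_B$), and you get the easy direction by postcomposing with the fully faithful $P_\Phi y_\Psi^A$. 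Both mechanisms produce the same right adjoint $P_\Phi s_A \circ \rho \circ P_\Phi y_\Psi^B$, and the structural facts you defer (the lifting of $P_\Phi$ to $\Psi$-cocomplete categories, $P_{\Phi\circ\Psi} \equiv P_\Phi P_\Psi$, $\Psi$-cocontinuity of $y_\Phi$, local \ffness{} of $P_\Phi$) are precisely the content of the distributivity hypothesis as the paper packages it, so deferring them is legitimate. What the paper's approach buys is that the key lemma is equational and hence valid in any 2-category, with no recourse to hom-sets or orthogonality; what yours buys is a self-contained concrete proof that avoids the machinery of pseudodistributive laws and liftings from \textcite{walker2019distributive}, at the cost of not generalising beyond settings where local objects make sense.
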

In the presence of coaccessibility\footnotemark{}, and when $(\Phi \cup \Psi)$ is the class of all small weights, the $\Phi$-adjoint functor theorem above essentially appears as \cite[Proposition~3.17]{lack2023accessible} (\cf{}~\cref{AFTs-for-weights}). In addition, special cases of the theorem have been previously established in the literature: for instance, the adjoint functor theorems of \textcite{ulmer1971adjoint} and of \textcite{tholen1984pro}, the multiadjoint functor theorem of \textcite{diers1977categories}, and the pluriadjoint functor theorem of \textcite{solian1990pluri}.
However, the theorem does not appear to have been stated previously in complete generality; it will be a corollary of our main theorem (\cref{DAFT}).
\footnotetext{A category is coaccessible if it is the completion of a small category under cofiltered limits, \ie{} it is the dual of an accessible category.}

There is one further evident generalisation of the adjoint functor theorem. Namely, rather than simply considering adjoint functor theorems in the context of ordinary category theory, we may be interested in adjoint functor theorems in the context of enriched category theory, internal category theory, fibred and indexed category theory, and so on. The concept of adjunction makes sense not just in the setting of categories, functors, and natural transformations, but more generally in any 2-category~\cite{maranda1965formal}, and specialises to the appropriate notion of adjunction in each case. We might therefore hope that such adjoint functor theorems might be established in an arbitrary 2-category. However, to state and prove a general adjoint functor theorem at this level of generality, it is necessary to understand each of the concepts involved from the perspective of 2-category theory.

\subsection{Adjointness and lax-idempotent pseudomonads}

In fact, the $\Phi$-adjoint functor theorem, as we have presented it above, is immediately amenable to formalisation in the sense just described. It is necessary only to understand how to express the notions of \emph{cocompleteness}, \emph{cocontinuity}, and of \emph{admissibility} in a 2-categorical setting. Formalising these notions is precisely the role of a \emph{lax-idempotent pseudomonad}~\cite{kock1995monads,zoberlein1976doctrines,power2000representation,marmolejo1997doctrines,kelly1997property}.

A lax-idempotent pseudomonad is a 2-dimensional notion of monad that, conceptually, captures the notion of \emph{free cocompletion}. For instance, every class of small weights $\Psi$ induces a lax-idempotent pseudomonad $\overline\Psi$ on the 2-category $\CAT$ of locally small categories. Furthermore, the notions of cocompleteness, cocontinuity, and admissibility are captured as follows.
\begin{itemize}
    \item The $\overline\Psi$-pseudoalgebras are precisely the $\Psi$-cocomplete categories.
    \item The $\overline\Psi$-pseudomorphisms are precisely the $\Psi$-cocontinuous functors.
    \item The $\overline\Psi$-admissible 1-cells (\ie{} those 1-cells $f \colon A \to B$ for which $\overline\Psi f \colon \overline\Psi A \to \overline\Psi B$ is left-adjoint) are precisely the $\Psi$-admissible functors.
\end{itemize}
Furthermore, free $\Phi$-cocompletions preserve $\Psi$-cocompleteness if and only if there exists a pseudodistributive law of $\overline\Psi$ over $\overline\Phi$. Thus, a pseudodistributive law of lax-idempotent pseudomonads gives us precisely what we need to interpret the general adjoint functor theorem above in the setting of an arbitrary 2-category. This is our main theorem. We denote by $R$ a lax-idempotent pseudomonad taking the place of $\Psi$, and by $I$ a locally \ff{} lax-idempotent pseudomonad taking the place of $\Phi$. In the presence of a pseudodistributive law $RI \tto IR$, we have the following.
\begin{theorem*}[\cref{DAFT}]
    Let $f$ be a 1-cell between $R$-pseudoalgebras. The following are equivalent.
    \begin{enumerate}
        \item $f$ is $I$-admissible.
        \item $f$ is $IR$-admissible and $R$-cocontinuous.
    \end{enumerate}
\end{theorem*}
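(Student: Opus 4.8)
The plan is to argue entirely inside the ambient $2$-category $\K$, exploiting three commuting squares obtained by applying the $2$-functor $I$ to canonical data attached to $R$-pseudoalgebras. I shall use the standard facts about lax-idempotent pseudomonads: for an $R$-pseudoalgebra $A$ with structure map $a_A \colon RA \to A$ there is an adjunction $a_A \dashv \eta^R_A$ in $\K$ with invertible counit, so $\eta^R_A$ is fully faithful and exhibits $A$ as a reflective retract of $RA$; the components of $\eta^I$ are fully faithful (as $I$ is locally fully faithful) and are $R$-pseudomorphisms for the $R$-algebra structures that $\delta \colon RI \tto IR$ induces on $IA$ and $IB$ — the latter following from the unit axiom $\delta_A \c R\eta^I_A \cong \eta^I_{RA}$ of $\delta$ together with naturality of $\eta^I$; fully faithful $1$-cells reflect colimits; and a $1$-cell between $R$-pseudoalgebras is $R$-cocontinuous exactly when it carries a (necessarily unique) $R$-pseudomorphism structure, i.e.\ when there is an invertible $2$-cell $f \c a_A \cong a_B \c Rf$. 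Finally, since $(IR)(f) = I(Rf)$, saying that $f$ is $IR$-admissible is the same as saying that $Rf \colon RA \to RB$ is $I$-admissible.

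The implication $(2) \Rightarrow (1)$ is a direct adjoint construction. Assume $f$ is $R$-cocontinuous and $IRf$ has a right adjoint $W \colon IRB \to IRA$. Applying $I$ to the square $f \c a_A \cong a_B \c Rf$ (the pseudomorphism structure of $f$) and to $Rf \c \eta^R_A \cong \eta^R_B \c f$ (naturality of $\eta^R$) yields invertible $2$-cells $If \c Ia_A \cong Ia_B \c IRf$ and $IRf \c I\eta^R_A \cong I\eta^R_B \c If$ in $\K$, where $Ia_A \dashv I\eta^R_A$ and $Ia_B \dashv I\eta^R_B$ still have invertible counits. Then $Ia_A \c W \c I\eta^R_B$ is right adjoint to $If$: its unit is obtained by composing the inverse of the counit $Ia_A \c I\eta^R_A \cong \mathrm{id}_{IA}$ with the unit of $IRf \dashv W$, suitably whiskered and pushed across the second square; its counit uses the first square, the counit of $IRf \dashv W$, and $Ia_B \c I\eta^R_B \cong \mathrm{id}_{IB}$; and the two triangle identities reduce, by a routine diagram chase, to those of the three constituent adjunctions. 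Hence $If$ is left adjoint, so $f$ is $I$-admissible.

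For $(1) \Rightarrow (2)$, assume $If$ is left adjoint. First, $f$ is $R$-cocontinuous: $If$ preserves all colimits, while $\eta^I_A$ and $\eta^I_B$ preserve $R$-colimits (being $R$-pseudomorphisms) and reflect colimits (being fully faithful); so transporting any $R$-colimit cocone in $A$ across $If \c \eta^I_A \cong \eta^I_B \c f$ shows that $f$ preserves it. It remains to show that $IRf$ is left adjoint, equivalently that $Rf$ is $I$-admissible. Here the plan is to pass to a formulation of $I$-admissibility that is stable under the $2$-functor $R$: that a $1$-cell $g \colon X \to Y$ is $I$-admissible precisely when it admits a right adjoint relative to $\eta^I_X$ in the sense of \textcite{ulmer1968properties} (equivalently, via the bicomma-object condition of \cite{bunge1999bicomma}), a reformulation that relies on the fact that $\eta^I$ is fully faithful. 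The $2$-functor $R$ carries such a datum for $f$ to one for $Rf$ with base $1$-cell $R\eta^I_A \colon RA \to RIA$, and the unit coherence $\delta_A \c R\eta^I_A \cong \eta^I_{RA}$ of $\delta$, together with the behaviour of the components of $\delta$, then converts relative adjointness along $R\eta^I_A$ into relative adjointness along $\eta^I_{RA}$; applying this to the relative right adjoint of $f$ supplied by $(1)$ shows that $Rf$ is $I$-admissible.

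The step I expect to be the main obstacle is precisely this last transport. A naive imitation of the construction in $(2) \Rightarrow (1)$ fails: the obvious candidate $I\eta^R_A \c u \c Ia_B$ for a right adjoint of $IRf$, where $u$ is a right adjoint of $If$, produces a putative counit that only lands in the reflection idempotent $I\eta^R_A \c Ia_A$ on $IRA$ rather than in $\mathrm{id}_{IRA}$, and there is no obvious way to repair this — because, unlike the structure map $a_A$, the unit $\eta^R$ is not a left adjoint, and $I\eta^R_A$ need not be one. Circumventing this requires replacing the adjoint calculus by an object-level, quantifier-free characterization of admissibility and tracking how it interacts with the coherence data of the pseudodistributive law; this is exactly where the hypotheses that $I$ be locally fully faithful and that $\delta$ exist do the essential work, and it constitutes the technical heart of the theorem.
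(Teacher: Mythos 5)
Your overall architecture coincides with the paper's: you obtain (2) $\Rightarrow$ (1) by conjugating the adjunction $IRf \dashv f^{IR}$ by the reflections $I\alpha \dashv I\rho_a$ and $I\beta \dashv I\rho_b$, which is exactly \cref{reflection-compose-adjunction} applied as in \cref{admissibility-for-composite}; and you split (1) $\Rightarrow$ (2) into deducing $R$-cocontinuity from \ffness{} of the unit of $I$, and deducing $IR$-admissibility from preservation of $I$-admissibility by $R$. Two remarks on the parts you do carry out. In (2) $\Rightarrow$ (1), the triangle identities do not reduce only to those of the constituent adjunctions: one also needs that the invertible square $If \circ I\alpha \cong I\beta \circ IRf$ is the mate of the pseudonaturality square for $\rho$ (this is where $R$-cocontinuity is actually consumed, corresponding to the hypothesis that $\ell'\eta r'$ be invertible in \cref{reflection-compose-adjunction}). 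In the $R$-cocontinuity step of (1) $\Rightarrow$ (2), your argument is phrased in terms of transporting and reflecting colimit cocones, which has no literal meaning in an abstract 2-category; it must be recast as the statement that the canonical lax $R$-morphism constraint of $f$ is invertible because that of $\iota_b f \cong (If)\iota_a$ is, and $\iota_b$ is representably \ff{}. This is doable, and is the content of \cref{R-cocontinuous-iff-IR-cocontinuous}, which the paper draws from \cite[Proposition~54 \& Remark~56]{walker2019distributive}; so this is a presentational rather than substantive issue.

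The genuine gap is the step you yourself flag: showing that $Rf$ is $I$-admissible when $f$ is, \ie{} that $IRf$ is left adjoint. Your proposed mechanism --- recharacterise $I$-admissibility as relative adjointness along $\iota_a$ via a bicomma-object condition, apply $R$, and correct along $\delta_a \circ R\iota_a \cong \iota_{Ra}$ --- faces two obstacles. First, the bicomma-object reformulation is not available in a bare 2-category $\K$, which need not have bicomma objects; the paper explicitly declines to work with relative adjunctions for this reason (\cref{admissibility-as-relative-adjointness}). Second, and more seriously, the assertion that ``$R$ carries such a datum for $f$ to one for $Rf$'' is precisely the point at issue: the universal property witnessing relative adjointness (an absolute lifting, or a bicomma object) is not preserved by an arbitrary 2-functor, and establishing that this particular $R$ does preserve it is exactly where the pseudodistributive law must do non-formal work. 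The paper closes this step by invoking \cite[Proposition~42]{walker2019distributive}, whose proof rests on the characterisation of pseudodistributive laws between lax-idempotent pseudomonads in terms of admissibility data. So your proposal correctly identifies the skeleton of the argument and correctly locates the hard step, but does not supply it.
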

\begin{corollary*}[\cref{AFT}]
    Let $f$ be a 1-cell between $R$-pseudoalgebras. The following are equivalent.
    \begin{enumerate}
        \item $f$ is left-adjoint.
        \item $f$ is $R$-admissible and $R$-cocontinuous.
    \end{enumerate}
\end{corollary*}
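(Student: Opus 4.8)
The plan is to obtain \cref{AFT} as the special case of \cref{DAFT} in which $I$ is taken to be the identity pseudomonad $\mathrm{Id}$ on the ambient 2-category $\K$. First I would check that $\mathrm{Id}$ meets the standing hypotheses imposed on $I$: it is trivially lax-idempotent; it is locally \ff{}, since its action on each hom-category is the identity functor (equivalently, its unit is an identity 1-cell); and $R$ distributes over it by means of the trivial pseudodistributive law $R\,\mathrm{Id} \tto \mathrm{Id}\,R$, namely the identity 2-transformation $R \tto R$, whose coherence axioms hold vacuously. One also notes that $\mathrm{Id}\,R \iso R$ as pseudomonads, compatibly with this distributive law.

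Next I would unwind the two conditions of \cref{DAFT} under this choice. Since $\mathrm{Id}\,f = f$ for every 1-cell $f$, the 1-cell $f$ is $I$-admissible precisely when $f$ is itself left-adjoint; hence condition~(1) of \cref{DAFT} becomes condition~(1) of \cref{AFT}. Since $\mathrm{Id}\,R \iso R$, a 1-cell is $IR$-admissible precisely when it is $R$-admissible, whereas $R$-cocontinuity is unaffected; hence condition~(2) of \cref{DAFT} becomes condition~(2) of \cref{AFT}. The remaining hypothesis --- that $f$ is a 1-cell between $R$-pseudoalgebras --- is shared by both statements, so applying \cref{DAFT} yields the desired equivalence.

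I do not expect any real obstacle here, as all of the substance resides in \cref{DAFT}. The only point requiring (minor) care is the bookkeeping above: verifying that the notions of admissibility, of being locally \ff{}, and of pseudodistributive law all degenerate as expected when one of the two pseudomonads is the identity. In particular, one should confirm that $I$-admissibility really does unwind to ordinary left-adjointness when $I = \mathrm{Id}$, which is immediate from the description of $I$-admissibility of $f$ as left-adjointness of $If$.
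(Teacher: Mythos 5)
Your proposal is correct and is exactly the paper's own argument: the paper proves \cref{AFT} by taking $I$ to be the identity pseudomonad in \cref{DAFT}, and your additional bookkeeping (identity is lax-idempotent and locally \ff{}, the trivial distributive law, $IR \iso R$, $I$-admissibility degenerating to left-adjointness) is just the routine verification the paper leaves implicit.
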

In this way, the various adjoint functor theorems we have described may be seen as individual cases of a general relationship between admissibility and cocontinuity for lax-idempotent pseudomonads in the presence of a pseudodistributive law (\cref{AFTs-for-weights}).

\subsection*{Acknowledgements}

The authors thank Giacomo Tendas for clarifying discussions about cocompletions, pettiness, and (weak) adjoint functor theorems; and Gabriele Lobbia for helpful discussions regarding pseudomonads in Gray-categories. They thank also John Bourke, Dylan McDermott, Giacomo Tendas, Charles Walker, and the anonymous reviewer for helpful comments on the paper.

Ivan Di Liberti was supported by the Swedish Research Council (Vetenskapsrådet) under grant \textnumero{}\,2019-04545. The research has received funding from the Knut and Alice Wallenberg Foundation through its programme for mathematics.
Fosco Loregian was supported by the ESF-funded Estonian IT Academy research measure (project 2014-2020.4.05.19-0001).

\section{Pseudodistributive laws, admissibility, and adjoint functor theorems}

Our basic setting will be a 2-category $\K$ equipped with lax-idempotent pseudomonads $I$ and $R$, and a pseudodistributive law $RI \tto IR$. We shall recall the basic definitions, but assume familiarity with the general theory of lax-idempotent pseudomonads and pseudodistributive laws~\cite{marmolejo1997doctrines,marmolejo1999distributive,marmolejo2004distributive,marmolejo2012kan,walker2019distributive}; our main theorem will follow readily by combining several aspects of this theory.

\begin{definition}[{\cite[Theorem~10.7]{marmolejo1997doctrines}}]
    A pseudomonad $R$ on $\K$ is \emph{lax-idempotent}\footnotemark{} if, for every $R$-pseudoalgebra $(a, \alpha)$, the pseudoalgebra structure 1-cell $\alpha \colon R a \to a$ is left-adjoint to the unit component $\rho_a \colon a \to R a$ of $R$, and the counit of $\alpha \adj \rho_a$ is invertible.%
    \footnotetext{Our terminology is due to \cite{kelly1997property}.}
\end{definition}

\begin{definition}
    A pseudomonad $I$ is \emph{locally \ff{}} if its underlying pseudofunctor is locally \ff{}, \ie{} if each functor $I_{a, b} \colon \K(a, b) \to \K(Ia, Ib)$ is \ff{}.
\end{definition}

The three fundamental concepts with which we are concerned are cocompleteness, cocontinuity, and admissibility. We recall these notions now.

\begin{definition}
    \label{pseudoalgebra}
    Let $I$ and $R$ be lax-idempotent pseudomonads.
    \begin{enumerate}
        \item An object $a$ is \emph{$R$-cocomplete} if it is a $R$-pseudoalgebra, \ie{} if the unit component $\rho_a \colon a \to Ra$ of $R$ admits a left adjoint with invertible counit.
        \item A 1-cell $f \colon a \to b$ between $R$-cocomplete objects is \emph{$R$-cocontinuous} if it is a pseudomorphism of $R$-pseudoalgebras. This is the case if the canonical 2-cell below (which is always a lax morphism of $R$-pseudoalgebras), defined as the mate of the invertible 2-cell in the pseudonaturality square for $\rho$, is itself invertible~\cite[\S6]{marmolejo1997doctrines}.
        % https://q.uiver.app/?q=WzAsNixbMSwwLCJSYSJdLFsyLDAsIlJiIl0sWzEsMSwiYSJdLFsyLDEsImIiXSxbMCwxLCJSYSJdLFszLDAsImIiXSxbMiwwLCJcXHJob19hIiwxLHsic3R5bGUiOnsidGFpbCI6eyJuYW1lIjoiaG9vayIsInNpZGUiOiJ0b3AifX19XSxbMywxLCJcXHJob19iIiwxLHsic3R5bGUiOnsidGFpbCI6eyJuYW1lIjoiaG9vayIsInNpZGUiOiJ0b3AifX19XSxbMiwzLCJmIiwyXSxbMCwxLCJSZiJdLFs0LDIsIlxcYWxwaGEiLDJdLFsxLDUsIlxcYmV0YSJdLFswLDMsIlxcaXNvIiwxLHsic2hvcnRlbiI6eyJzb3VyY2UiOjQwLCJ0YXJnZXQiOjQwfSwibGV2ZWwiOjJ9XSxbNCwwLCIiLDEseyJsZXZlbCI6Miwic3R5bGUiOnsiaGVhZCI6eyJuYW1lIjoibm9uZSJ9fX1dLFszLDUsIiIsMSx7ImxldmVsIjoyLCJzdHlsZSI6eyJoZWFkIjp7Im5hbWUiOiJub25lIn19fV0sWzEsMTQsIlxcdmFyZXBzaWxvbl5iIiwwLHsic2hvcnRlbiI6eyJzb3VyY2UiOjIwLCJ0YXJnZXQiOjQwfX1dLFsxMywyLCJcXGV0YV5hIiwyLHsic2hvcnRlbiI6eyJzb3VyY2UiOjQwLCJ0YXJnZXQiOjIwfX1dXQ==
        \[\begin{tikzcd}[sep=large]
        	& Ra & Rb & b \\
        	Ra & a & b
        	\arrow["{\rho_a}"{description}, hook, from=2-2, to=1-2]
        	\arrow["{\rho_b}"{description}, hook, from=2-3, to=1-3]
        	\arrow["f"', from=2-2, to=2-3]
        	\arrow["Rf", from=1-2, to=1-3]
        	\arrow["\alpha"', from=2-1, to=2-2]
        	\arrow["\beta", from=1-3, to=1-4]
        	\arrow["\iso"{description}, shorten <=8pt, shorten >=8pt, Rightarrow, from=1-2, to=2-3]
        	\arrow[""{name=0, anchor=center, inner sep=0}, Rightarrow, no head, from=2-1, to=1-2]
        	\arrow[""{name=1, anchor=center, inner sep=0}, Rightarrow, no head, from=2-3, to=1-4]
        	\arrow["{\varepsilon^b}", shorten <=2pt, shorten >=4pt, Rightarrow, from=1-3, to=1]
        	\arrow["{\eta^a}"', shorten <=4pt, shorten >=2pt, Rightarrow, from=0, to=2-2]
        \end{tikzcd}\]
        \item A 1-cell $f \colon a \to b$ is \emph{$I$-admissible} if $I f \colon I a \to I b$ is left-adjoint~\cite[Definition~1.1]{bunge1999bicomma}. In this case, we denote the right adjoint by $f^I$.
        \qedhere
    \end{enumerate}
\end{definition}

Observe in particular that $I$-admissibility is closed under composition, since $I$ is pseudofunctorial, and adjoints are closed under composition.

\begin{remark}
    \label{admissibility-as-relative-adjointness}
    In the enriched setting, admissibility with respect to a lax-idempotent pseudomonad may be seen to correspond to relative adjointness in a sharp sense, justifying our informal practice of speaking of $I$-admissibility as \emph{adjointness relative to $I$}. In particular, for $\Phi$ a class of weights, and denoting by $\overline\Phi$ the lax-idempotent pseudomonad exhibiting the free cocompletion under $\Phi$-weighted colimits (\cf{}~\cite{kelly2000monadicity}), a $\V$-functor $f \colon a \to b$ is $\overline\Phi$-admissible if and only if $f$ is left-adjoint relative to the cocompletion $a \to \Phi a$ of $a$ under $\Phi$-weighted colimits~\cite[Proposition~4.6]{lack2023accessible}.
    % https://q.uiver.app/#q=WzAsMyxbMCwxLCJhIl0sWzEsMCwiYiJdLFsyLDEsIlxcUGhpIGEiXSxbMCwxLCJmIl0sWzEsMiwiIiwwLHsic3R5bGUiOnsiYm9keSI6eyJuYW1lIjoiZGFzaGVkIn19fV0sWzAsMiwiIiwyLHsic3R5bGUiOnsidGFpbCI6eyJuYW1lIjoiaG9vayIsInNpZGUiOiJ0b3AifX19XSxbMyw0LCIiLDAseyJvZmZzZXQiOjIsImxldmVsIjoxLCJzdHlsZSI6eyJuYW1lIjoiYWRqdW5jdGlvbiJ9fV1d
    \[\begin{tikzcd}
    	& b \\
    	a && {\Phi a}
    	\arrow[""{name=0, anchor=center, inner sep=0}, "f", from=2-1, to=1-2]
    	\arrow[""{name=1, anchor=center, inner sep=0}, dashed, from=1-2, to=2-3]
    	\arrow[hook, from=2-1, to=2-3]
    	\arrow["\dashv"{anchor=center}, shift right=2, draw=none, from=0, to=1]
    \end{tikzcd}\]
    (While we should expect a similar statement also to hold in a formal setting, the notion of \emph{relative adjunction} requires a richer setting than 2-categories~\cite[Remark~5.9]{arkor2024formal}, so we shall not consider it here.)
\end{remark}

There are many equivalent formulations of the notion of pseudodistributive law $RI \tto IR$ between pseudomonads~\cite{walker2023no}. For us, the most appropriate will be the following.

\begin{definition}[{\cite[Theorem~35]{walker2019distributive}}]
    \label{composite}
    Let $R$ and $I$ be lax-idempotent pseudomonads on 2-category $\K$. $R$ \emph{distributes over} $I$ if there exists a lifting $\I$ of $I$ to the 2-category $R\h\Alg$ of $R$-pseudoalgebras, \ie{} a pseudomonad $\I$ on $R\h\Alg$ rendering the following square commutative, such that the pseudomonad structure of $\I$ is compatible with the pseudomonad structure of $I$ (\cf{}~\cite[Definition~7.3]{tanaka2004pseudo}).
    % https://q.uiver.app/#q=WzAsNCxbMCwwLCJSXFxoXFxBbGciXSxbMSwwLCJSXFxoXFxBbGciXSxbMCwxLCJcXEsiXSxbMSwxLCJcXEsiXSxbMCwxLCJcXEkiXSxbMiwzLCJJIiwyXSxbMCwyLCJVX1IiLDJdLFsxLDMsIlVfUiJdXQ==
    \[\begin{tikzcd}
    	R\h\Alg & R\h\Alg \\
    	\K & \K
    	\arrow["\I", from=1-1, to=1-2]
    	\arrow["I"', from=2-1, to=2-2]
    	\arrow["{U_R}"', from=1-1, to=2-1]
    	\arrow["{U_R}", from=1-2, to=2-2]
    \end{tikzcd}\]
\end{definition}

Note that, if a lifting $\I$ exists, then it is essentially unique and lax-idempotent~\cite[Corollaries~49 \& 50]{walker2019distributive}. Furthermore, the essentially unique composite pseudomonad $IR$ is also lax-idempotent~\cite[Theorem~11.7]{marmolejo1999distributive}.

\begin{example}
    \label{Rex-Ind}
    A motivating example, which provides a mnemonic for the symbols $I$ and $R$, is $\K \defeq \CAT$ the 2-category of locally small categories, $R \defeq \b{Rex}$ the free finite cocompletion pseudomonad, and $I \defeq \b{Ind}$ the free small filtered cocompletion pseudomonad, whose composite $IR$ is free small cocompletion pseudomonad. The existence of a lifting follows from the theory of sound limit doctrines~\cite{adamek2002classification} (\cf{}~\cite[Remark~6.6]{day2007limits}).
\end{example}

Before we can proceed with the proof of the main theorem, we require a small lemma regarding the composition of adjoints. It is an easy exercise to show that left adjoints in a 2-category are closed under composition. However, perhaps counterintuitively, it is possible under certain circumstances to obtain a left adjoint by composing a right adjoint with a left adjoint.

\begin{lemma}
    \label{reflection-compose-adjunction}
    Consider a pair of adjunctions in a 2-category $\K$, with unit and counit $\eta$ and $\varepsilon$, and $\eta'$ and $\varepsilon'$ respectively,
    % https://q.uiver.app/?q=WzAsMyxbMCwwLCJ4Il0sWzEsMCwieSJdLFsyLDAsInoiXSxbMSwwLCJcXGVsbCIsMix7Im9mZnNldCI6Mn1dLFswLDEsInIiLDIseyJvZmZzZXQiOjIsInN0eWxlIjp7InRhaWwiOnsibmFtZSI6Imhvb2siLCJzaWRlIjoidG9wIn19fV0sWzEsMiwiXFxlbGwnIiwwLHsib2Zmc2V0IjotMn1dLFsyLDEsInInIiwwLHsib2Zmc2V0IjotMn1dLFszLDQsIiIsMix7ImxldmVsIjoxLCJzdHlsZSI6eyJuYW1lIjoiYWRqdW5jdGlvbiJ9fV0sWzUsNiwiIiwwLHsibGV2ZWwiOjEsInN0eWxlIjp7Im5hbWUiOiJhZGp1bmN0aW9uIn19XV0=
    \[\begin{tikzcd}
    	x & y & z
    	\arrow[""{name=0, anchor=center, inner sep=0}, "\ell"', shift right=2, from=1-2, to=1-1]
    	\arrow[""{name=1, anchor=center, inner sep=0}, "r"', shift right=2, hook, from=1-1, to=1-2]
    	\arrow[""{name=2, anchor=center, inner sep=0}, "{\ell'}", shift left=2, from=1-2, to=1-3]
    	\arrow[""{name=3, anchor=center, inner sep=0}, "{r'}", shift left=2, from=1-3, to=1-2]
    	\arrow["\dashv"{anchor=center, rotate=-90}, draw=none, from=0, to=1]
    	\arrow["\dashv"{anchor=center, rotate=-90}, draw=none, from=2, to=3]
    \end{tikzcd}\]
    for which $\varepsilon \colon \ell r \tto 1$ and $\ell' \eta r' \colon \ell' r' \tto \ell' r \ell r'$ are invertible. Then $\ell' r \adj \ell r'$.
\end{lemma}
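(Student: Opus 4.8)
The plan is to exhibit an explicit unit and counit for the claimed adjunction $\ell' r \adj \ell r'$ and then verify the two triangle identities. Write $\eta \colon 1_y \tto r\ell$ for the unit of $\ell \adj r$ (so that $\eta$ and $\varepsilon$ satisfy $\varepsilon\ell \cdot \ell\eta = 1_\ell$ and $r\varepsilon \cdot \eta r = 1_r$), and write $\eta' \colon 1_y \tto r'\ell'$, $\varepsilon' \colon \ell' r' \tto 1_z$ for the unit and counit of $\ell' \adj r'$. As the unit $\bar\eta \colon 1_x \tto (\ell r')(\ell' r) = \ell r'\ell' r$ I would take the pasting
\[1_x \xtto{\varepsilon\inv} \ell r \xtto{\ell\eta' r} \ell r'\ell' r,\]
and as the counit $\bar\varepsilon \colon (\ell' r)(\ell r') = \ell' r\ell r' \tto 1_z$ the pasting
\[\ell' r\ell r' \xtto{(\ell'\eta r')\inv} \ell' r' \xtto{\varepsilon'} 1_z;\]
these make sense precisely because $\varepsilon$ and $\ell'\eta r'$ are assumed invertible.

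To check the first triangle identity I would whisker $\bar\eta$ on the left by $\ell' r$ and $\bar\varepsilon$ on the right by $\ell' r$ and paste. Using $r\varepsilon \cdot \eta r = 1_r$, the whiskered component $\ell' r\varepsilon\inv$ rewrites as $\ell'\eta r$; then the composite $(\ell' r\ell\eta' r) \cdot (\ell'\eta r)$ can be reorganised, by the interchange law applied to the two 2-cells $\eta$ and $\eta'$ on $y$, as $(\ell'\eta r'\ell' r) \cdot (\ell'\eta' r)$. Now the factor $\ell'\eta r'\ell' r$ cancels against the whiskered inverse $(\ell'\eta r'\ell' r)\inv$, and what remains is $(\varepsilon'\ell' r) \cdot (\ell'\eta' r) = \big(\varepsilon'\ell' \cdot \ell'\eta'\big)r = 1_{\ell' r}$ by the triangle identity $\varepsilon'\ell' \cdot \ell'\eta' = 1_{\ell'}$. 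The second triangle identity is entirely analogous, this time invoking the other two triangle identities $\varepsilon\ell \cdot \ell\eta = 1_\ell$ (to rewrite $\varepsilon\inv\ell r'$ as $\ell\eta r'$) and $r'\varepsilon' \cdot \eta' r' = 1_{r'}$, together with the same interchange move and the same cancellation of $\ell'\eta r'$ against its inverse. The only real difficulty here is the bookkeeping of whiskerings and the order of the repeated symbols $\ell, r, \ell', r'$; conceptually nothing subtle happens beyond ``cancel the two hypothesised isomorphisms against their inverses, then apply ordinary triangle identities''.

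If one prefers to avoid constructing the counit, an equivalent route is to verify directly that post-composition with $\bar\eta$ gives, for all $c$ and all 1-cells $g \colon c \to x$ and $h \colon c \to z$, a bijection $\K(c, z)(\ell' r g, h) \to \K(c, x)(g, \ell r' h)$, $\phi \mapsto \ell r'\phi \cdot \bar\eta g$, with inverse $\psi \mapsto \varepsilon'_h \cdot (\ell'\eta_{r'h})\inv \cdot \ell' r\psi$; naturality in $c, g, h$ is then automatic from these formulae, and the two round-trip computations collapse, via naturality of $\varepsilon$ and $\varepsilon'$ and the interchange of $\eta$ with $\eta'$, to the triangle identities of the two given adjunctions. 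This presentation makes transparent where the hypotheses are used: $\varepsilon$ invertible supplies $g \iso \ell r g$, and $\ell'\eta r'$ invertible supplies $\ell' r\ell r' h \iso \ell' r' h$, which are exactly the two non-formal moves in the round trips.
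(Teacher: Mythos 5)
Your proof is correct and is essentially identical to the paper's: the unit $\ell\eta' r \cdot \varepsilon\inv$ and counit $\varepsilon' \cdot (\ell'\eta r')\inv$ are exactly those of the paper, and your verification of the triangle identities (one triangle identity of each given adjunction, an interchange of $\eta$ with $\eta'$, and cancellation of the two hypothesised isomorphisms) is precisely the content of the paper's two commutative diagrams.
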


This may essentially be seen as an adjoint lifting theorem along the lines of \cites[Theorem~3]{barr1972point}[Theorem~1.4]{power1988unified}.  We note that the proof is essentially contained in the proof of \cite[Lemma~2.1 \& Remark~2.2]{marmolejo2019level}, though the authors work under the additional assumption that $\varepsilon'$ is invertible. To reassure the reader that this assumption is not necessary, we provide a self-contained proof.

\begin{proof}
    We define the unit and counit by the following 2-cells.
    \begin{align*}
        & 1 \xtto{\varepsilon\inv} \ell r \xtto{\ell \eta' r} \ell r' \ell' r &
        & \ell' r \ell r' \xtto{(\ell' \eta r')\inv} \ell' r' \xtto{\varepsilon'} 1
    \end{align*}
    The triangle identities for $\ell \adj r$ and $\ell' \adj r'$ imply commutativity of the following diagrams, and hence the triangle identities for $\ell' r \adj \ell r'$.
    \[
    % https://q.uiver.app/?q=WzAsNixbMCwwLCJcXGVsbCcgciJdLFsyLDAsIlxcZWxsJyByIFxcZWxsIHInIFxcZWxsJyByIl0sWzEsMCwiXFxlbGwnIHIgXFxlbGwgciJdLFsyLDEsIlxcZWxsJyByJyBcXGVsbCcgciJdLFsyLDIsIlxcZWxsJyByIl0sWzEsMSwiXFxlbGwnIHIiXSxbMCwyLCJcXGVsbCcgciBcXHZhcmVwc2lsb25cXGludiJdLFsyLDEsIlxcZWxsJyByIFxcZWxsIFxcZXRhJyByIl0sWzEsMywiKFxcZWxsJyBcXGV0YSByJylcXGludiBcXGVsbCcgciJdLFszLDQsIlxcdmFyZXBzaWxvbicgXFxlbGwnIHIiXSxbNSwyLCJcXGVsbCcgXFxldGEgciIsMV0sWzUsMCwiIiwxLHsibGV2ZWwiOjIsInN0eWxlIjp7ImhlYWQiOnsibmFtZSI6Im5vbmUifX19XSxbNSwzLCJcXGVsbCcgXFxldGEnIHIiLDFdLFs1LDQsIiIsMSx7ImxldmVsIjoyLCJzdHlsZSI6eyJoZWFkIjp7Im5hbWUiOiJub25lIn19fV1d
    \begin{tikzcd}
    	{\ell' r} & {\ell' r \ell r} & {\ell' r \ell r' \ell' r} \\
    	& {\ell' r} & {\ell' r' \ell' r} \\
    	&& {\ell' r}
    	\arrow["{\ell' r \varepsilon\inv}", from=1-1, to=1-2]
    	\arrow["{\ell' r \ell \eta' r}", from=1-2, to=1-3]
    	\arrow["{(\ell' \eta r')\inv \ell' r}", from=1-3, to=2-3]
    	\arrow["{\varepsilon' \ell' r}", from=2-3, to=3-3]
    	\arrow["{\ell' \eta r}"{description}, from=2-2, to=1-2]
    	\arrow[Rightarrow, no head, from=2-2, to=1-1]
    	\arrow["{\ell' \eta' r}"{description}, from=2-2, to=2-3]
    	\arrow[Rightarrow, no head, from=2-2, to=3-3]
    \end{tikzcd}
    \hspace{4em}
    % https://q.uiver.app/?q=WzAsNixbMCwwLCJcXGVsbCByJyJdLFsxLDAsIlxcZWxsIHIgXFxlbGwgciciXSxbMiwwLCJcXGVsbCByJyBcXGVsbCcgciBcXGVsbCByJyJdLFsyLDEsIlxcZWxsIHInIFxcZWxsJyByJyJdLFsyLDIsIlxcZWxsIHInIl0sWzEsMSwiXFxlbGwgciciXSxbMCwxLCJcXHZhcmVwc2lsb25cXGludiBcXGVsbCByJyJdLFsxLDIsIlxcZWxsIFxcZXRhJyByIFxcZWxsIHInIl0sWzIsMywiXFxlbGwgcicgKFxcZWxsJyBcXGV0YSByJylcXGludiJdLFszLDQsIlxcZWxsIHInXFx2YXJlcHNpbG9uJyJdLFs1LDEsIlxcZWxsIFxcZXRhIHInIiwxXSxbNSwwLCIiLDEseyJsZXZlbCI6Miwic3R5bGUiOnsiaGVhZCI6eyJuYW1lIjoibm9uZSJ9fX1dLFs1LDMsIlxcZWxsIFxcZXRhJyByJyIsMV0sWzUsNCwiIiwxLHsibGV2ZWwiOjIsInN0eWxlIjp7ImhlYWQiOnsibmFtZSI6Im5vbmUifX19XV0=
    \begin{tikzcd}
    	{\ell r'} & {\ell r \ell r'} & {\ell r' \ell' r \ell r'} \\
    	& {\ell r'} & {\ell r' \ell' r'} \\
    	&& {\ell r'}
    	\arrow["{\varepsilon\inv \ell r'}", from=1-1, to=1-2]
    	\arrow["{\ell \eta' r \ell r'}", from=1-2, to=1-3]
    	\arrow["{\ell r' (\ell' \eta r')\inv}", from=1-3, to=2-3]
    	\arrow["{\ell r'\varepsilon'}", from=2-3, to=3-3]
    	\arrow["{\ell \eta r'}"{description}, from=2-2, to=1-2]
    	\arrow[Rightarrow, no head, from=2-2, to=1-1]
    	\arrow["{\ell \eta' r'}"{description}, from=2-2, to=2-3]
    	\arrow[Rightarrow, no head, from=2-2, to=3-3]
    \end{tikzcd}
    \]
\end{proof}

A pseudodistributive law of lax-idempotent pseudomonads produces two new lax-idempotent pseudomonads: the lifted pseudomonad $\I$ on $R\h\Alg$, and the composite pseudomonad $IR$ on $\K$. We are interested in characterising cocompleteness, cocontinuity, and admissibility for both: it is admissibility that is the focus of this section, but cocompleteness and cocontinuity will be useful in its study.

We may characterise cocompleteness and cocontinuity for both $\I$ and $IR$ in terms of cocompleteness for $R$ and $I$.

\begin{lemma}
    \label{IR-cocomplete}
    Let $R$ and $I$ be lax-idempotent pseudomonads on a 2-category $\K$ for which $R$ distributes over $I$. The following are equivalent for an object $a$ of $\K$.
    \begin{enumerate}
        \item $a$ is $IR$-cocomplete.
        \item $a$ is $R$-cocomplete, and $\I$-cocomplete as an object of $R\h\Alg$.
        \item $a$ is $R$-cocomplete and $I$-cocomplete.
    \end{enumerate}
    Furthermore, the following are equivalent for a 1-cell $f \colon a \to b$ between $IR$-cocomplete objects.
    \begin{enumerate}
        \item[(1$\,'$)] $f$ is $IR$-cocontinuous.
        \item[(2$\,'$)] $f$ is $R$-cocontinuous, and $\I$-cocontinuous as a 1-cell of $R\h\Alg$.
        \item[(3$\,'$)] $f$ is $R$-cocontinuous and $I$-cocontinuous.
    \end{enumerate}
\end{lemma}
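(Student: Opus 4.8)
The plan is to prove both the object-level chain (1) $\Leftrightarrow$ (2) $\Leftrightarrow$ (3) and the 1-cell chain (1$'$) $\Leftrightarrow$ (2$'$) $\Leftrightarrow$ (3$'$) by pivoting through the 2-category $R\h\Alg$: the equivalences involving the $\I$-conditions will be formal consequences of the theory of pseudodistributive laws, while the equivalences relating the $\I$-conditions in $R\h\Alg$ to the $I$-conditions in $\K$ will follow from the good behaviour of the forgetful 2-functor $U_R \colon R\h\Alg \to \K$. Throughout we exploit that $R$, $I$, $\I$, and $IR$ are all lax-idempotent, so that every pseudoalgebra and pseudomorphism structure in sight is property-like; this is what makes these statements — phrased as properties rather than as structure — meaningful, and it lets us suppress coherence data.

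For the first step, recall that the pseudodistributive law induces an equivalence of 2-categories $\I\h\Alg \simeq (IR)\h\Alg$ compatible with the forgetful 2-functors to $\K$~\cite{marmolejo1999distributive,walker2019distributive}. Unwinding this equivalence: an $IR$-pseudoalgebra structure on an object $a$ amounts to an $R$-pseudoalgebra structure on $a$ together with an $\I$-pseudoalgebra structure on the resulting object of $R\h\Alg$; and a pseudomorphism of $IR$-pseudoalgebras amounts to an $R$-cocontinuous 1-cell of $\K$ that is moreover $\I$-cocontinuous as a 1-cell of $R\h\Alg$. These are precisely (1) $\Leftrightarrow$ (2) and (1$'$) $\Leftrightarrow$ (2$'$).

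For the second step we use two standard facts about a lax-idempotent pseudomonad $R$: that $U_R \colon R\h\Alg \to \K$ is locally fully faithful — it is the inclusion of the locally full sub-2-category on the $R$-cocomplete objects and $R$-cocontinuous 1-cells — and that any left adjoint in $\K$ between $R$-pseudoalgebras is automatically $R$-cocontinuous. Fix $a$ that is $R$-cocomplete. By \cref{composite}, $U_R$ carries the $\I$-unit $\rho^\I_a \colon a \to \I a$ to the $I$-unit $a \to Ia$, and $Ia = U_R \I a$ is $R$-cocomplete. If $a$ is $\I$-cocomplete as an object of $R\h\Alg$, applying the 2-functor $U_R$ to the adjunction witnessing this exhibits $a$ as $I$-cocomplete. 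Conversely, given a left adjoint to $a \to Ia$ in $\K$ with invertible counit, that left adjoint is $R$-cocontinuous (being a left adjoint between $R$-pseudoalgebras), hence a 1-cell of $R\h\Alg$, and its unit and counit, being 2-cells between $R$-cocontinuous 1-cells, are automatically $R$-algebra 2-cells; so the adjunction lifts to $R\h\Alg$ and witnesses $\I$-cocompleteness of $a$. This yields (2) $\Leftrightarrow$ (3). The 1-cell version is analogous: for $f \colon a \to b$ between $IR$-cocomplete objects that is already $R$-cocontinuous, \cref{composite} ensures that $U_R$ sends the canonical comparison 2-cell witnessing $\I$-cocontinuity of $f$ (as a 1-cell of $R\h\Alg$) to the canonical comparison 2-cell witnessing $I$-cocontinuity of $f$ (in $\K$), and local full faithfulness of $U_R$ gives that one is invertible if and only if the other is, yielding (2$'$) $\Leftrightarrow$ (3$'$). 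The main obstacle is exactly these backward implications: one must check that an adjunction, or an invertible 2-cell, present in $\K$ between data lying over $R\h\Alg$ in fact already lives in $R\h\Alg$ — and this is where the lax-idempotence of $R$ (through cocontinuity of left adjoints and local full faithfulness of $U_R$) together with the compatibility clause in \cref{composite} does the real work.
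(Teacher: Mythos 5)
Your proof is correct, and while the first half (the equivalences with the $\I$-conditions, via the compatibility of $\I\h\Alg$ with $(IR)\h\Alg$) matches the paper's appeal to Marmolejo's Theorem~6.6, your treatment of (2)~$\Leftrightarrow$~(3) and (2$'$)~$\Leftrightarrow$~(3$'$) is genuinely different. The paper gets the forward implications from the explicit description of pseudoalgebras for a lifted pseudomonad in \cite[\S4]{marmolejo2004distributive}, and the backward implications by verifying the hypothesis of \cite[Proposition~53]{walker2019distributive} — namely that the $R$-pseudoalgebra structure $\alpha \colon Ra \to a$ is $R_I$-cocontinuous because, being a left adjoint, it preserves left extensions. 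You instead argue directly: since $U_R \colon R\h\Alg \to \K$ is locally \ff{} and left adjoints between pseudoalgebras of a lax-idempotent pseudomonad are automatically pseudomorphisms, the adjunction $\chi \adj \iota_a$ witnessing $I$-cocompleteness lifts along $U_R$ to an adjunction against the $\I$-unit, and the comparison 2-cell for $\I$-cocontinuity is invertible iff its image under $U_R$ (the comparison for $I$-cocontinuity) is. Your route is more self-contained — it replaces the black-box invocation of Walker's Proposition~53 with two standard facts about lax-idempotent pseudomonads that the paper uses elsewhere anyway — at the cost of having to verify by hand that $U_R$ carries the $\I$-unit, algebra structures, and canonical comparison 2-cells to their $I$-counterparts (which does hold, by the compatibility clause in \cref{composite} and essential uniqueness of pseudoalgebra structure). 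The paper's route outsources exactly those verifications to the cited results. Both are valid; yours arguably makes clearer \emph{why} the lemma is true, namely that everything in sight already lives in the locally full sub-2-category $R\h\Alg$.
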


\begin{proof}
    The equivalence of (1) and (2), and (1$'$) and (2$'$) is \cite[Theorem~6.6]{marmolejo2004distributive}.
    
    An explicit description of a pseudoalgebra and pseudomorphism for a lifted pseudomonad is given in \cite[\S4]{marmolejo2004distributive}. In particular, an $\I$-pseudoalgebra is an object with both $R$-pseudoalgebra and $I$-pseudoalgebra structure; similarly, an $\I$-pseudomorphism is a 1-cell with both $R$-pseudomorphism and $I$-pseudomorphism structure. It is therefore immediate that (2) $\implies$ (3), and (2$'$) $\implies$ (3$'$).

    Conversely, suppose an object $a$ is $R$-cocomplete and $I$-cocomplete. Denote by $\alpha \colon R a \to a$ the $R$-pseudoalgebra structure of $a$. In the terminology of \cite[Definition~31]{walker2019distributive}, $\alpha$ is \emph{$R_I$-cocontinuous} since $\alpha$ is left-adjoint and therefore preserves left extensions. Therefore, \cite[Proposition~53(a)]{walker2019distributive} implies that $a$ is $\I$-cocomplete. Similarly, suppose a 1-cell between $IR$-cocomplete objects is $R$-cocontinuous and $I$-cocontinuous. Then \cite[Proposition~53]{walker2019distributive} implies that $f$ is $\I$-cocontinuous. Hence (3) $\implies$ (2), and (3$'$) $\implies$ (2$'$).
\end{proof}

\begin{remark}
    \label{remark-composite-is-coproduct}
    We observe in passing that \cref{IR-cocomplete} implies that the composite lax-idempotent pseudomonad $IR$ is given by the coproduct $R + I$ of its components. However, since such a characterisation is tangential to our main theorem, we defer a proof to the appendix (\cref{composite-is-coproduct}).
\end{remark}

We may characterise admissibility for both $\I$ (\cref{admissibility-for-lifting}) and $IR$ (\cref{admissibility-for-composite}) in terms of admissibility for $I$. We shall not be concerned with $\I$-admissibility hereafter, mentioning it only for completeness. However, the characterisation of $IR$-admissibility will provide one direction of our general adjoint functor theorem.

\begin{proposition}
    \label{admissibility-for-lifting}
    Let $R$ and $I$ be lax-idempotent pseudomonads on a 2-category $\K$ for which $R$ distributes over $I$, and let $f \colon a \to b$ be an $R$-cocontinuous 1-cell between $R$-cocomplete objects. The following are equivalent.
    \begin{enumerate}
        \item $f$ is $\I$-admissible as a 1-cell in $R\h\Alg$.
        \item $f$ is $I$-admissible as a 1-cell in $\K$ and $f^I$ is $IR$-cocontinuous.
    \end{enumerate}
\end{proposition}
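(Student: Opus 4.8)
The plan is to deduce both implications from the fact that the forgetful $2$-functor $U_R \colon R\h\Alg \to \K$ is locally fully faithful, thereby isolating a single substantive point about the right adjoint $f^I$. Because $R$ is lax-idempotent, the hom-functor $R\h\Alg(x, y) \to \K(U_R x, U_R y)$ is fully faithful for every pair $x, y$, with essential image the $R$-cocontinuous $1$-cells; consequently an adjunction in $\K$ between underlying objects of $R$-pseudoalgebras whose two legs are $R$-cocontinuous lifts uniquely to an adjunction in $R\h\Alg$, while $U_R$ carries adjunctions to adjunctions. Applying this to $\I f$, which is a $1$-cell of $R\h\Alg$ with underlying $1$-cell $I f$, one gets: $f$ is $\I$-admissible if and only if $f$ is $I$-admissible and $f^I$ is $R$-cocontinuous. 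Moreover $I a$ and $I b$ underlie the $R$-pseudoalgebras $\I a$ and $\I b$ and are simultaneously free $I$-pseudoalgebras, so by \cref{IR-cocomplete} they are $IR$-cocomplete, and $f^I \colon I b \to I a$ is $IR$-cocontinuous if and only if it is both $R$-cocontinuous and $I$-cocontinuous. Hence condition (1) unwinds to ``$f$ is $I$-admissible and $f^I$ is $R$-cocontinuous'' and condition (2) to ``$f$ is $I$-admissible and $f^I$ is $R$-cocontinuous and $I$-cocontinuous'', and the proposition reduces to a single claim: whenever $I f$ has a right adjoint $f^I$ in $\K$, that right adjoint is automatically $I$-cocontinuous. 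In particular the direction (2) $\Rightarrow$ (1) needs nothing further — under (2), \cref{IR-cocomplete} makes $f^I$ $R$-cocontinuous, hence a $1$-cell of $R\h\Alg$, and local full faithfulness of $U_R$ lifts $I f \dashv f^I$ to $R\h\Alg$.

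Proving this remaining claim is where the work is, and the step I expect to be the main obstacle. Write $\iota$ and $\mu$ for the unit and multiplication of $I$. The crucial feature is that $I f$ is a morphism of \emph{free} $I$-pseudoalgebras: pseudonaturality of $\iota$ yields an isomorphism $I f \circ \iota_a \iso \iota_b \circ f$, so $I f$ is, up to isomorphism, the free $I$-cocontinuous extension of $\iota_b \circ f$ along $\iota_a$. I would show that the canonical comparison $2$-cell $\mu_a \circ I(f^I) \tto f^I \circ \mu_b$ — whose invertibility is, by \cref{pseudoalgebra}, exactly the $I$-cocontinuity of $f^I$ — is the mate, along the adjunctions $I f \dashv f^I$ and $I(I f) \dashv I(f^I)$, of the invertible structure cell $\mu_b \circ I(I f) \tto I f \circ \mu_a$ exhibiting $I f$ as an $I$-pseudomorphism; equivalently, one shows that $f^I$ coincides with the left $I$-extension $\mu_a \circ I(f^I \circ \iota_b)$ of its restriction along $\iota_b$. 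This is a form of doctrinal adjunction adapted to lax-idempotent pseudomonads, comparable to \cite[Proposition~53]{walker2019distributive}. The subtle point is that it does not suffice merely to produce \emph{some} isomorphism $\mu_a \circ I(f^I) \iso f^I \circ \mu_b$: for lax-idempotent pseudomonads this need not be the canonical comparison, and the mate of an invertible $2$-cell is not invertible in general, so one must genuinely identify the transported cell with the canonical one, using that $I f$ preserves the ``generators'' $\iota$.

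Granting the claim, conditions (1) and (2) are seen to be equivalent by the reduction of the first paragraph, completing the proof modulo the doctrinal-adjunction step.
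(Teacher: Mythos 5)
Your decomposition is exactly the one the paper uses: local full faithfulness of $U_R \colon R\h\Alg \to \K$ reduces $\I$-admissibility to ``$I$-admissible with $R$-cocontinuous right adjoint'', and \cref{IR-cocomplete} splits $IR$-cocontinuity of $f^I$ into $R$- and $I$-cocontinuity, so everything hinges on the automatic $I$-cocontinuity of $f^I$. The only divergence is that the paper discharges this last point by citing the characterisation of admissibility for lifted lax-idempotent pseudomonads in \cite[Proposition~54]{walker2019distributive}, whereas you isolate the precise statement needed and sketch a direct proof. Your sketch is sound: since $I\iota_a \dashv \mu_a$, $I\iota_b \dashv \mu_b$, $I(If) \dashv I(f^I)$ and $If \dashv f^I$, the pseudonaturality square $I(If) \c I\iota_a \iso I\iota_b \c If$ has all four sides left adjoints, so its double mate $\mu_a \c I(f^I) \tto f^I \c \mu_b$ is invertible, and you correctly flag that the real work is identifying this double mate with the canonical comparison cell of \cref{pseudoalgebra} (which is defined via the \emph{other} adjunctions $\mu \adj \iota_{I\ph}$ of the lax-idempotent adjoint string). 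That identification is precisely the content of the cited result of Walker, so your argument is a legitimate, if unexecuted in its final step, reproof of the black box rather than a different route.
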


\begin{proof}
    Since $R\h\Alg \to \K$ is locally \ff{}, adjunctions in $R\h\Alg$ are precisely adjunctions in $\K$ with $R$-cocontinuous right adjoints. Furthermore, by \cref{IR-cocomplete}, a 1-cell is $IR$-cocontinuous if and only if it is $R$-cocontinuous and $I$-cocontinuous. The result then follows from the characterisation of admissibility for lifted lax-idempotent pseudomonads in \cite[Proposition~54]{walker2019distributive}.
\end{proof}

\begin{lemma}
    \label{admissibility-for-composite}
    Let $R$ and $I$ be lax-idempotent pseudomonads for which $R$ distributes over $I$, and let $f \colon a \to b$ be an $R$-cocontinuous 1-cell between $R$-cocomplete objects. The following are equivalent.
    \begin{enumerate}
        \item $f$ is $IR$-admissible.
        \item $f$ is $I$-admissible.
    \end{enumerate}
\end{lemma}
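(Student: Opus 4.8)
The plan is to prove the two implications separately, the guiding observation being that $IRf = I(Rf)$, so that $f$ is $IR$-admissible exactly when $Rf \colon Ra \to Rb$ is $I$-admissible. First I would record some structure. Write $\rho$ for the unit of $R$, and let $\alpha \colon Ra \to a$ and $\beta \colon Rb \to b$ be the pseudoalgebra structure 1-cells of $a$ and $b$; lax-idempotency of $R$ gives $\alpha \adj \rho_a$ and $\beta \adj \rho_b$ with invertible counits, so $\rho_a$ and $\rho_b$ are \ff{}, and applying the pseudofunctor $I$ (which preserves adjunctions and invertibility of 2-cells) gives $I\alpha \adj I\rho_a$ and $I\beta \adj I\rho_b$ with invertible counits, exhibiting $Ia$ and $Ib$ as reflective subobjects of $IRa$ and $IRb$. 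The pseudonaturality square of $\rho$ at $f$ gives $Rf \cdot \rho_a \cong \rho_b \cdot f$, whence $f \cong \beta \cdot Rf \cdot \rho_a$; and since $f$ is $R$-cocontinuous its pseudomorphism 2-cell exhibits $f \cdot \alpha \cong \beta \cdot Rf$. Applying $I$ yields
\[ If \cong I\beta \cdot IRf \cdot I\rho_a, \qquad IRf \cdot I\rho_a \cong I\rho_b \cdot If, \qquad If \cdot I\alpha \cong I\beta \cdot IRf. \]

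For the implication $(1) \Rightarrow (2)$ I would argue using \cref{reflection-compose-adjunction}. Assuming $IRf$ is left-adjoint, take $\ell \defeq I\alpha$ and $r \defeq I\rho_a$ (so $\ell \adj r$ with invertible counit), and $\ell' \defeq I\beta \cdot IRf$, which is left-adjoint as a composite of left adjoints ($I\beta \adj I\rho_b$, and $IRf$ is left-adjoint by hypothesis); let $r'$ be its right adjoint. Then $\ell' r = I\beta \cdot IRf \cdot I\rho_a \cong I\beta \cdot I\rho_b \cdot If \cong If$, using the second isomorphism above together with $I\beta \cdot I\rho_b \cong I(\beta \rho_b) \cong 1_{Ib}$. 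The remaining hypothesis of the lemma --- that $\ell' \eta r'$ be invertible, where $\eta$ is the unit of $I\alpha \adj I\rho_a$ --- reduces, using $\ell' \cong If \cdot I\alpha$ (the third isomorphism), to invertibility of $I\alpha \cdot \eta$, which is immediate from the triangle identity for $I\alpha \adj I\rho_a$ together with invertibility of the counit. The lemma then yields $If \cong \ell' r$ left-adjoint, \ie{} $f$ is $I$-admissible.

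For the converse $(2) \Rightarrow (1)$ --- which I expect to be the main obstacle --- the plan is to show directly that $Rf$ is $I$-admissible, given $If \adj f^I$. Since $\rho_a$ exhibits $Ra$ as the free $R$-cocompletion of $a$, the $R$-cocontinuous 1-cell $Rf$ is the $R$-cocontinuous extension of $\rho_b \cdot f$ along $\rho_a$; correspondingly, the objects of the free $I$-cocompletion $IRa = I(Ra)$ that would witness $I$-admissibility of $Rf$ should arise --- under the identification of $R$-continuous presheaves on $Ra$ with presheaves on $a$ --- as $R$-colimits of the objects of $Ia$ that witness $I$-admissibility of $f$. The delicate point is that, because $R$ distributes over $I$, an $R$-colimit of $I$-small data is again $IR$-small, so these objects do lie in $I(Ra)$, whence the pointwise right adjoint to $I(Rf)$ exists. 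The crux is to render this colimit-theoretic heuristic in the purely 2-categorical setting: concretely, I would use that $\rho^I_{Ra} \colon Ra \to IRa$ is $R$-cocontinuous --- being the component at $Ra$ of the unit of the lifted pseudomonad $\I$, hence a 1-cell of $R\h\Alg$ by \cref{composite} --- and combine this with the distributive law to build the right adjoint of $I(Rf)$ out of $f^I$, $\alpha$, $\beta$ and the pseudoalgebra structures, then check the triangle identities.
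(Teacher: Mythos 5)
Your direction (1) $\Rightarrow$ (2) is correct and is essentially the paper's own argument: the same instantiation of \cref{reflection-compose-adjunction} with $\ell = I\alpha$, $r = I\rho_a$, $\ell' = I\beta \circ IRf$, $r' = f^{IR} \circ I\rho_b$, and the same identification $\ell' r \cong If$ via pseudonaturality of $\rho$. Your verification that $\ell'\eta r'$ is invertible --- via $\ell' \cong If \circ I\alpha$, which is where $R$-cocontinuity of $f$ enters --- is a detail the paper leaves implicit, and it is right.

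The converse (2) $\Rightarrow$ (1) is where your proposal has a genuine gap, and you have correctly located it: as you note, everything reduces to showing that $Rf$ is $I$-admissible whenever $f$ is, but in place of a proof you offer only the colimit-theoretic heuristic that ``an $R$-colimit of $I$-small data is again $IR$-small.'' This preservation of $I$-admissibility by $R$ is precisely where the pseudodistributive law does its work, and the paper does not re-derive it: it invokes \cite[Proposition~42]{walker2019distributive} for exactly this claim, after which $IRf = I(Rf)$ is left-adjoint and the direction is finished (note that neither $R$-cocontinuity of $f$ nor the pseudoalgebra structures are needed here). Your concrete plan --- to build the right adjoint of $IRf$ ``out of $f^I$, $\alpha$, $\beta$ and the pseudoalgebra structures'' --- looks unlikely to succeed as stated, since $f^{IR}$ need not factor through $Ib$ and $Ia$: already for $R = \b{Rex}$ and $I = \b{Ind}$ the right adjoint of $IRf$ is restriction along $f$ between categories of small presheaves, which does not pass through the ind-completions. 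Some input equivalent to Walker's proposition (or a direct construction of the right adjoint of $I(Rf)$ from $f^I$ using the components of the distributive law itself, not merely the algebra structures) is unavoidable, and supplying it is the actual content of this implication.
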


\begin{proof}
    (1) $\implies$ (2). Assume $f$ is $IR$-admissible. We have the following situation, denoting by $\rho \colon 1 \tto R$ the unit of $R$, and by $\alpha \colon R a \to a$ and $\beta \colon R b \to b$ the $R$-pseudoalgebra structures of $a$ and $b$ respectively.
    % https://q.uiver.app/?q=WzAsNCxbMCwxLCJJYSJdLFsxLDEsIkliIl0sWzAsMCwiSVJhIl0sWzEsMCwiSVJiIl0sWzAsMSwiSWYiLDJdLFsyLDMsIklSZiIsMCx7Im9mZnNldCI6LTJ9XSxbMCwyLCJJXFxyaG9fYSIsMix7Im9mZnNldCI6Miwic3R5bGUiOnsidGFpbCI6eyJuYW1lIjoiaG9vayIsInNpZGUiOiJ0b3AifX19XSxbMSwzLCJJXFxyaG9fYiIsMix7Im9mZnNldCI6Miwic3R5bGUiOnsidGFpbCI6eyJuYW1lIjoiaG9vayIsInNpZGUiOiJ0b3AifX19XSxbMiwwLCJJXFxhbHBoYSIsMix7Im9mZnNldCI6Mn1dLFszLDEsIklcXGJldGEiLDIseyJvZmZzZXQiOjJ9XSxbMywyLCJmXntJUn0iLDAseyJvZmZzZXQiOi0yfV0sWzgsNiwiIiwxLHsibGV2ZWwiOjEsInN0eWxlIjp7Im5hbWUiOiJhZGp1bmN0aW9uIn19XSxbOSw3LCIiLDEseyJsZXZlbCI6MSwic3R5bGUiOnsibmFtZSI6ImFkanVuY3Rpb24ifX1dLFs1LDEwLCIiLDAseyJsZXZlbCI6MSwic3R5bGUiOnsibmFtZSI6ImFkanVuY3Rpb24ifX1dXQ==
    \[\begin{tikzcd}[sep=huge]
    	IRa & IRb \\
    	Ia & Ib
    	\arrow["If"', from=2-1, to=2-2]
    	\arrow[""{name=0, anchor=center, inner sep=0}, "IRf", shift left=2, from=1-1, to=1-2]
    	\arrow[""{name=1, anchor=center, inner sep=0}, "{I\rho_a}"', shift right=2, hook, from=2-1, to=1-1]
    	\arrow[""{name=2, anchor=center, inner sep=0}, "{I\rho_b}"', shift right=2, hook, from=2-2, to=1-2]
    	\arrow[""{name=3, anchor=center, inner sep=0}, "I\alpha"', shift right=2, from=1-1, to=2-1]
    	\arrow[""{name=4, anchor=center, inner sep=0}, "I\beta"', shift right=2, from=1-2, to=2-2]
    	\arrow[""{name=5, anchor=center, inner sep=0}, "{f^{IR}}", shift left=2, from=1-2, to=1-1]
    	\arrow["\dashv"{anchor=center}, draw=none, from=3, to=1]
    	\arrow["\dashv"{anchor=center}, draw=none, from=4, to=2]
    	\arrow["\dashv"{anchor=center, rotate=-90}, draw=none, from=0, to=5]
    \end{tikzcd}\]
    Taking
    \[\ell \defeq I \alpha \qquad r \defeq I \rho_a \qquad \ell' \defeq I \beta \circ I R f \qquad r' \defeq f^{IR} \circ I \rho_b\]
    in \cref{reflection-compose-adjunction}, using pseudonaturality of $\rho$, it therefore follows that $I f \adj I \alpha \circ f^{IR} \circ I \rho_b$.

    (2) $\implies$ (1). If $f$ is $I$-admissible, then $R f$ is $I$-admissible~\cite[Proposition~42]{walker2019distributive}, so that $IR f$ is left-adjoint.
\end{proof}

Any left-adjoint 1-cell is automatically cocontinuous for any lax-idempotent pseudomonad~\cite[Proposition~5.1]{marmolejo1997doctrines}; this aligns with the intuition that lax-idempotent pseudomonads are cocompletions. In fact, this is an instance of a more general phenomenon arising in the presence of a pseudodistributive law of lax-idempotent pseudomonads, as we show in \cref{admissibility-implies-cocontinuity}. First, we shall need a preparatory lemma.

\begin{lemma}
    \label{R-cocontinuous-iff-IR-cocontinuous}
    Let $R$ and $I$ be lax-idempotent pseudomonads for which $R$ distributes over $I$, and let $f \colon a \to b$ be a 1-cell between $R$-cocomplete objects. Suppose that $I$ is locally \ff{}. The following are equivalent.
    \begin{enumerate}
        \item $f$ is $R$-cocontinuous.
        \item $I f$ is $IR$-cocontinuous.
    \end{enumerate}
\end{lemma}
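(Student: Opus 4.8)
The plan is to use \cref{IR-cocomplete} to reduce the statement to the equivalence that $f$ is $R$-cocontinuous if and only if $If$ is $R$-cocontinuous (with respect to the essentially-unique $R$-pseudoalgebra structures on $Ia$ and $Ib$), and then to prove the two implications separately: the forward one using the lifted pseudomonad $\I$, and the backward one using that $I$ is locally \ff{}.

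For the reduction: since $a$ and $b$ are $R$-pseudoalgebras, applying $\I$ to them equips $Ia$ and $Ib$ with $R$-pseudoalgebra structures; being free $I$-pseudoalgebras, they are moreover $I$-cocomplete, so by \cref{IR-cocomplete} they are $IR$-cocomplete. Furthermore $If$ is always $I$-cocontinuous, being a pseudomorphism of the free $I$-pseudoalgebras via the pseudonaturality square of the multiplication of $I$. Applying the 1-cell part of \cref{IR-cocomplete} to $If \colon Ia \to Ib$ then shows that $If$ is $IR$-cocontinuous if and only if it is $R$-cocontinuous, reducing the statement to comparing $R$-cocontinuity of $f$ and of $If$.

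The forward implication is immediate: if $f$ is $R$-cocontinuous it is a 1-cell of $R\h\Alg$, and since $\I$ is a pseudofunctor lifting $I$ along $U_R$ (\cref{composite}), $\I f$ is a 1-cell of $R\h\Alg$ with underlying 1-cell $If$, so $If$ is $R$-cocontinuous. For the backward implication I would first record two facts. (i) Each unit component $\iota_a \colon a \to Ia$ of $I$ is $R$-cocontinuous, because the unit of $\I$ lies over the unit of $I$ (the compatibility condition in \cref{composite}), so $\iota_a$ is the underlying 1-cell of an $R\h\Alg$-morphism. (ii) Each $\iota_b$ is a \ff{} 1-cell: by pseudonaturality of $\iota$, post-composition $\K(z, \iota_b) \colon \K(z, b) \to \K(z, Ib)$ factors as $I_{z, b}$ followed by restriction along $\iota_z$; the former is \ff{} by hypothesis and its image consists of $I$-pseudomorphisms, while restriction along the unit $\iota_z$ is \ff{} on $I$-pseudomorphisms into an $I$-pseudoalgebra (by the universal property of the free $I$-pseudoalgebra on $z$, using that $I\h\Alg \to \K$ is locally \ff{}), so the composite is \ff{}. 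Granting these, suppose $If$ is $R$-cocontinuous; then $\iota_b \circ f \iso If \circ \iota_a$ (by pseudonaturality of $\iota$) is a composite of $R$-cocontinuous 1-cells, hence $R$-cocontinuous. Its lax-pseudomorphism structure 2-cell factors as the structure 2-cell of $\iota_b$ (invertible by (i)) whiskered by $Rf$, followed by $\iota_b$ whiskered by the structure 2-cell $\gamma_f$ of $f$; so invertibility of the whole forces $\iota_b \circ \gamma_f$ invertible, and since $\K(Ra, \iota_b)$ is \ff{} by (ii), hence conservative, $\gamma_f$ is invertible — that is, $f$ is $R$-cocontinuous.

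The main obstacle I anticipate is fact (ii): squeezing from the hypothesis that $I$ is \emph{locally} \ff{} the conclusion that the \emph{unit components} $\iota_a$ are \ff{} 1-cells. Together with the routine but fiddly identification of the structure 2-cell of the composite $\iota_b \circ f$, this rests on the explicit descriptions of lifted pseudoalgebras and pseudomorphisms from the work of Marmolejo and Walker cited above.
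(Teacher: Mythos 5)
Your argument is correct, but it takes a genuinely different route from the paper, whose entire proof is a citation: it invokes \cite[Proposition~54 \& Remark~56]{walker2019distributive}, which states directly that, when $I$ is locally \ff{}, $f$ is an $R$-pseudomorphism if and only if $If$ is an $IR$-pseudomorphism. You instead reconstruct the argument from first principles: the reduction via \cref{IR-cocomplete} to comparing $R$-cocontinuity of $f$ and of $If$ is sound (the $R$-structures on $Ia$, $Ib$ being essentially unique since $R$ is lax-idempotent), the forward direction via pseudofunctoriality of $\I$ is immediate, and your fact (ii) -- that local \ffness{} of $I$ forces the unit components $\iota_b$ to be representably \ff{}, via the factorisation of $\K(z,\iota_b)$ through $I_{z,b}$ and the universal property of the free $I$-pseudoalgebra -- is a standard and correct lemma about lax-idempotent pseudomonads, and is indeed where the local \ffness{} hypothesis does its work. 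The one ingredient you should make explicit, since your ``routine but fiddly'' step rests on it, is the uniqueness of lax morphism structures for a lax-idempotent pseudomonad: this is what guarantees both that the canonical structure 2-cell of the composite $\iota_b \circ f$ is the pasting of the canonical structure 2-cells of $f$ and $\iota_b$, and that the invertible 2-cell $\iota_b \circ f \iso If \circ \iota_a$ transports $R$-cocontinuity from one side to the other. What the two approaches buy: the paper's proof is a one-line delegation that keeps the exposition short but opaque; yours is self-contained and makes visible that local \ffness{} is needed only for the direction (2)~$\Rightarrow$~(1), and only through the representable \ffness{} of the unit.
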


\begin{proof}
    First, note that, since $R$ is lax-idempotent, $f$ is canonically a lax morphism of $R$-pseudoalgebras (\cf{}~\cref{pseudoalgebra}).
    The result then follows directly from \cite[Proposition~54 \& Remark~56]{walker2019distributive}, which states that, under the assumption of local \ffness{} of $I$, $f$ is an $R$-pseudomorphism (\ie{} $f$ is $R$-cocontinuous) if and only if $I f$ is an $I R$-pseudomorphism (\ie{} $I f$ is $IR$-cocontinuous).
\end{proof}

\begin{lemma}
    \label{admissibility-implies-cocontinuity}
    Let $R$ and $I$ be lax-idempotent pseudomonads for which $R$ distributes over $I$, and let $f \colon a \to b$ be a 1-cell between $R$-cocomplete objects. Suppose that $I$ is locally \ff{}. If $f$ is $I$-admissible, then it is $R$-cocontinuous.
\end{lemma}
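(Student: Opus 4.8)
The plan is to reduce the statement, via \cref{R-cocontinuous-iff-IR-cocontinuous}, to the well-known fact that left-adjoint 1-cells are cocontinuous for lax-idempotent pseudomonads. That lemma asserts that $f$ is $R$-cocontinuous precisely when $If \colon Ia \to Ib$ is $IR$-cocontinuous, and all of its hypotheses -- $f$ a 1-cell between $R$-cocomplete objects, and $I$ locally \ff{} -- are among our assumptions. Since $f$ is $I$-admissible, $If$ is by definition left-adjoint, so it remains only to observe that a left-adjoint 1-cell between $IR$-pseudoalgebras is automatically an $IR$-pseudomorphism: this is \cite[Proposition~5.1]{marmolejo1997doctrines} applied to the composite pseudomonad $IR$, which is lax-idempotent since $R$ distributes over $I$.

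For this last step to be meaningful, we must know that the objects $Ia$ and $Ib$ are themselves $IR$-cocomplete. This is straightforward: the lifting $\I$ of $I$ to $R\h\Alg$ sends the $R$-pseudoalgebra $a$ to an $R$-pseudoalgebra $\I a$ whose underlying object is $U_R \I a = I U_R a = Ia$, so $Ia$ is $R$-cocomplete; and $Ia$ is $I$-cocomplete, being the free $I$-pseudoalgebra on $a$. Hence $Ia$ is $IR$-cocomplete by \cref{IR-cocomplete}, and likewise for $Ib$.

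Putting these together, $If$ is a left-adjoint 1-cell between $IR$-cocomplete objects, hence $IR$-cocontinuous, and therefore $f$ is $R$-cocontinuous by \cref{R-cocontinuous-iff-IR-cocontinuous}. I do not expect any genuine difficulty here: the argument is a short assembly of already-established results, and the only mild subtlety is the bookkeeping that $Ia$ and $Ib$ carry $IR$-pseudoalgebra structure, so that ``$IR$-cocontinuous'' is a meaningful assertion about $If$.
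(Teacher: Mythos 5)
Your proposal is correct and follows essentially the same route as the paper: the paper likewise notes that $I$-admissibility makes $If$ left-adjoint, invokes the fact that left adjoints are cocontinuous for any lax-idempotent pseudomonad (citing \cite[Proposition~5.1]{marmolejo1997doctrines}, applied to the composite $IR$), and concludes via \cref{R-cocontinuous-iff-IR-cocontinuous}. Your additional bookkeeping that $Ia$ and $Ib$ carry $IR$-pseudoalgebra structure is a detail the paper leaves implicit, and it is verified correctly.
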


\begin{proof}
    Since $f$ is $I$-admissible, $I f$ is left-adjoint by definition. Thus $I f$ is $IR$-cocontinuous, from which the result follows by \cref{R-cocontinuous-iff-IR-cocontinuous}.
\end{proof}

\begin{remark}
    Denote by $\iota$ the unit of the lax-idempotent pseudomonad $I$ above. From the perspective of relative adjunctions (\cf{}~\cref{admissibility-as-relative-adjointness}), \cref{admissibility-implies-cocontinuity} is a reformulation of the fact that a left $(\iota_a \colon a \to I a)$-relative adjoint preserves those colimits that are preserved by $\iota_a$~\cite[Proposition~5.11]{arkor2024formal}, since the existence of the pseudodistributive law implies that $\iota_a$ is $R$-cocontinuous.
\end{remark}

\begin{example}
    \label{tholen}
    \Cref{admissibility-implies-cocontinuity} specialises to \cite[Theorem~2.2]{tholen1984pro} when $\K = \CAT$, taking $I$ to be the free cocompletion under colimits indexed by a class of small categories, containing the terminal category.
\end{example}

We may now give our main theorem, which is a generalised adjoint 1-cell theorem.

\begin{theorem}
    \label{DAFT}
    Let $R$ and $I$ be lax-idempotent pseudomonads for which $R$ distributes over $I$. Suppose that $I$ is locally \ff{}. A 1-cell $f \colon a \to b$ between $R$-cocomplete objects is $I$-admissible if and only if it is $R$-cocontinuous and $IR$-admissible.
\end{theorem}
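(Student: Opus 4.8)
The plan is to obtain the theorem essentially for free by assembling \cref{admissibility-for-composite} and \cref{admissibility-implies-cocontinuity}, the content having been packaged into those lemmas. The one genuine point to be careful about is that \cref{admissibility-for-composite} requires $f$ to be $R$-cocontinuous as a standing hypothesis, so in the ``$I$-admissible $\implies$'' direction we must first establish $R$-cocontinuity before we are entitled to invoke it.

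For the forward implication, I would assume $f$ is $I$-admissible. Since $I$ is locally \ff{}, \cref{admissibility-implies-cocontinuity} immediately yields that $f$ is $R$-cocontinuous. Now $f$ is an $R$-cocontinuous 1-cell between $R$-cocomplete objects, so the hypotheses of \cref{admissibility-for-composite} are met, and its implication (2) $\implies$ (1) gives that $f$ is $IR$-admissible. This establishes both conjuncts of the right-hand side.

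For the converse, assume $f$ is $R$-cocontinuous and $IR$-admissible. Then $f$ is once again an $R$-cocontinuous 1-cell between $R$-cocomplete objects, so \cref{admissibility-for-composite} applies, and its implication (1) $\implies$ (2) gives that $f$ is $I$-admissible. Combining the two directions completes the proof.

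I do not anticipate any real obstacle here: the substantive work lies in \cref{admissibility-for-composite} (which rests on Walker's analysis of admissibility for composite and lifted lax-idempotent pseudomonads) and in \cref{admissibility-implies-cocontinuity} (which in turn relies on \cref{R-cocontinuous-iff-IR-cocontinuous}). The only thing worth flagging is that it is precisely \cref{admissibility-implies-cocontinuity} — the statement that $I$-admissibility forces $R$-cocontinuity when $I$ is locally \ff{} — that allows the a priori hypothesis of $R$-cocontinuity to be dispensed with on the left-hand side, so that the equivalence is a true biconditional rather than a biconditional relative to an ambient cocontinuity assumption.
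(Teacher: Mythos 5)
Your proposal is correct and follows exactly the same route as the paper: the converse direction is \cref{admissibility-for-composite} (1)$\implies$(2), and the forward direction first applies \cref{admissibility-implies-cocontinuity} to obtain $R$-cocontinuity and then invokes \cref{admissibility-for-composite} (2)$\implies$(1). Your remark about needing $R$-cocontinuity before \cref{admissibility-for-composite} becomes applicable is precisely the point the paper's proof is structured around.
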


\begin{proof}
    Sufficiency follows from \cref{admissibility-for-composite}. For necessity, suppose that $f$ is $I$-admissible. Then, by \cref{admissibility-implies-cocontinuity}, it is $R$-cocontinuous. Therefore, by \cref{admissibility-for-composite}, $f$ is $IR$-admissible.
\end{proof}

In light of \cref{admissibility-as-relative-adjointness}, \cref{DAFT} may be viewed as a \emph{relative adjoint functor theorem}. We obtain a formal adjoint 1-cell theorem as an immediate corollary.

\begin{corollary}
    \label{AFT}
    Let $R$ be a lax-idempotent pseudomonad. A 1-cell between $R$-cocomplete objects is left-adjoint if and only if it is $R$-cocontinuous and $R$-admissible.
\end{corollary}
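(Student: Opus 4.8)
The plan is to obtain \cref{AFT} as the special case of \cref{DAFT} in which $I$ is taken to be the identity pseudomonad $\mathrm{Id}_\K$ on $\K$. First I would verify the hypotheses of \cref{DAFT} for the pair $(R, \mathrm{Id}_\K)$. The identity pseudomonad is lax-idempotent, since every $\mathrm{Id}_\K$-pseudoalgebra structure 1-cell is an identity and is thus left-adjoint to the (identity) unit component with invertible counit. It is locally \ff{}, because each functor $(\mathrm{Id}_\K)_{a,b}$ is the identity on $\K(a, b)$. And $R$ distributes over $\mathrm{Id}_\K$: the identity pseudomonad $\mathrm{Id}_{R\h\Alg}$ on $R\h\Alg$ is a strict lifting of $\mathrm{Id}_\K$ along $U_R$, with compatibility of the pseudomonad structures holding trivially; in the notation of \cref{composite}, the lifted pseudomonad $\I$ is $\mathrm{Id}_{R\h\Alg}$.

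Next I would identify the notions appearing in the conclusion of \cref{DAFT} under this instantiation. Since composite pseudomonads are essentially unique (as noted after \cref{composite}) and $R \simeq \mathrm{Id}_\K \cdot R$ with the evident identity pseudodistributive law, the composite pseudomonad $IR$ is $R$; hence $IR$-admissibility is $R$-admissibility. Moreover, unwinding \cref{pseudoalgebra}, a 1-cell $f$ is $\mathrm{Id}_\K$-admissible exactly when $\mathrm{Id}_\K f = f$ is left-adjoint, \ie{} exactly when $f$ is left-adjoint. Substituting these identifications into \cref{DAFT} gives precisely the statement of \cref{AFT}.

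I expect no real obstacle here: the mathematical content is entirely contained in \cref{DAFT}, and the remaining work is routine bookkeeping — that $\mathrm{Id}_\K$ meets the stated definition of lax-idempotence, and that the composite $IR$ is genuinely (equivalent to) $R$ rather than merely receiving a comparison from it — both of which are immediate from the relevant definitions together with the essential uniqueness of composite pseudomonads.
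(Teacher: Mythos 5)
Your proposal is correct and coincides with the paper's own proof, which likewise obtains \cref{AFT} by taking $I$ to be the identity pseudomonad in \cref{DAFT}; the paper simply leaves implicit the routine verifications (lax-idempotence and local full faithfulness of the identity, the trivial distributive law, and the identification $IR \simeq R$) that you spell out.
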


\begin{proof}
    Take $I$ to be the identity pseudomonad in \cref{DAFT}.
\end{proof}

\begin{remark}
    \Cref{AFT} is analogous to the formal adjoint 1-cell theorems of \cites[Proposition~21]{street1978yoneda}[Corollary~10]{wood1982abstract}. The theorem of \citeauthor{street1978yoneda}, and that of \citeauthor{wood1982abstract}, is more general than \cref{AFT} in one respect, in that they do not require the existence of all colimits in the domain of the 1-cell (\cf{}~\cref{not-all-colimits}). However, in contrast to their formulations, our formulation directly specialises to adjoint functor theorems of interest, and is consequently often more practical.
\end{remark}

\begin{remark}
    \label{envelope}
    A \emph{(Gabriel--Ulmer) envelope} in the sense of \cite[Definition~3.1]{di2023accessibility} is, in particular, a pseudodistributive law of lax-idempotent pseudomonads $RI \tto IR$ for which $I$ and $R$ are locally \ff{} (\cf{}~\cite[Remark~1.12]{bunge1999bicomma}). Every envelope thus satisfies the assumptions of \cref{DAFT}.
\end{remark}

\begin{example}
    \label{adjoint-iff-cocontinuous-plus-admissible}
    \Cref{AFT} specialises to \cite[Theorem~2.3]{tholen1984pro} under the same assumptions as \cref{tholen}.
\end{example}

\begin{example}
    \label{well-cocomplete-AFT}
    Take $\K = \CAT$ and $R$ to be the free cocompletion under small colimits and large cointersections of regular epimorphisms. Then \cref{AFT} asserts that a functor between well-cocomplete categories, in the sense of \cite[Remark~4.39]{lack2023virtual}, is left-adjoint if and only if it is small-cocontinuous, preserves large cointersections of regular epimorphisms, and satisfies the solution set condition. While these assumptions are marginally stronger than those of \citeauthor{freyd1964abelian}'s adjoint functor theorem, they are typically satisfied in examples of interest (\cf{}~\cite{kelly1981large}).
\end{example}

\begin{example}
    \label{RAFT-for-LP-categories}
    Let $A$ and $B$ be locally presentable categories. In particular, $A$ and $B$ are small-complete~\cite[Corollary~1.28]{adamek1994locally}. The dual of \cref{AFT}, with respect to the free completion under small limits, states that a functor $f \colon A \to B$ is right-adjoint if and only if it is small-continuous and \emph{small-coadmissible}, in that, for every object $b \in B$, the copresheaf $B(b, f{-}) \colon A \to \Set$ is \emph{small}, \ie{} a small limit of corepresentables. In the terminology of \cite[Definition~4.11]{lack2023virtual}, a functor is small-coadmissible if and only if it admits a \emph{virtual left adjoint}. Consequently, by \cite[Proposition~4.20]{lack2023virtual}, $f$ is small-coadmissible if and only if it is accessible. We immediately recover the fact that a functor between locally presentable categories is right-adjoint if and only if it is small-continuous and accessible~(\cite[Theorem~1.66]{adamek1994locally}).
\end{example}

\begin{corollary}
    \label{Phi-AFT}
    Let $\V$ be a complete and cocomplete symmetric closed monoidal category and let $\Psi$ and $\Phi$ be classes of small weights for which
    \begin{enumerate}
        \item the free $\Phi$-cocompletion $\Phi a$ of any $\Psi$-cocomplete $\V$-category $a$ is $\Psi$-cocomplete;
        \item the functor $\Phi f \colon \Phi a \to \Phi b$ induced by any $\Psi$-cocontinuous $\V$-functor $f \colon a \to b$ between $\Psi$-cocomplete $\V$-categories is $\Psi$-cocontinuous.
    \end{enumerate}
    Then a functor between $\Psi$-cocomplete locally small categories is $\Phi$-admissible if and only if it is $(\Phi \cup \Psi)$-admissible and $\Psi$-cocontinuous.
\end{corollary}

\begin{proof}
    Under the given assumptions on $\V$, every class $\Psi$ of small weights induces a \lff{} lax-idempotent pseudomonad $\overline\Psi$ on $\VCAT$ whose pseudoalgebras are the $\V$-categories admitting $\Psi$-weighted colimits and whose pseudomorphisms are the $\V$-functors preserving $\Psi$-weighted colimits (\cf~\cite{kelly2000monadicity}). For classes $\Psi$ and $\Phi$ of small weights, conditions (1) and (2) simply express the condition that $\overline\Psi$ distributes over $\overline\Phi$.
    Observe that $\overline{\Phi \cup \Psi} \equiv \overline\Phi + \overline\Psi$, since free $(\Phi \cup \Psi)$-cocompletions are given by closing under $(\Phi \cup \Psi)$-weighted colimits of representables, hence by closing under $\Phi$- and $\Psi$-weighted colimits of representables. Thus, by \cref{composite-is-coproduct}, we have that $(\overline\Phi \c \overline\Psi)$-admissibility coincides with $(\overline\Phi + \overline\Psi)$-admissibility, which is equivalently $(\Phi \cup \Psi)$-admissibility. The result thus follows from \cref{DAFT}.
\end{proof}

\begin{example}
    \label{AFTs-for-weights}
    Let $\V$ be a complete and cocomplete symmetric closed monoidal category, and let $\Psi$ and $\Phi$ be classes of weights, inducing locally \ff{} lax-idempotent pseudomonads $\overline\Psi$ and $\overline\Phi$ on $\VCAT$. That $\overline\Psi$ distributes over $\overline\Phi$ to form the small-cocompletion pseudomonad is equivalent to asking for $\Phi$ to be a \emph{companion} for $\Psi$ in the terminology of \cite{lack2023accessible} (\cf{}~Proposition~4.9 \ibid{}). Each such pair is thus an envelope in the sense of \cref{envelope}. A notable class of examples is given by taking $\Phi \defeq \Psi^{{+}}$ the class of $\Psi$-flat weights when $\Psi$ is \emph{weakly sound} in the sense of \cite[Definition~3.2]{lack2023virtual}: various examples are given in Example 4.8 \ibid.

    Below we list several examples when $\V = \Set$. For each of the following, \cref{Phi-AFT} asserts that a functor between locally small $\Psi$-cocomplete categories is left-$\Phi$-adjoint if and only if it is small-admissible and $\Psi$-cocontinuous.
    \begin{center}
    \begin{tblr}{cccc}
      $\Psi$ & $\Phi$ & $\Phi$-adjoints & Reference \\
      \hline
      Small weights & $\varnothing$ & Adjoints & \cite[(13)]{ulmer1971adjoint} \\
      Small weights & Absolute weights & Semiadjoints & --- \\
      Finite weights & Filtered weights & Pluriadjoints & \cite[Theorem~2.1']{solian1990pluri} \\
      Connected weights & Discrete weights & Multiadjoints & \cite[Th\'eor\`eme~3.0.4]{diers1977categories} \\
      $\varnothing$ & Small weights & Virtual adjoints~\cite{lack2023virtual} & \cf{}~\cite[Axiom~2]{street1978yoneda}
    \end{tblr}
    \end{center}
\end{example}

\begin{example}
    \label{relative-AFT}
    As a special case of \cref{AFTs-for-weights}, observe that every functor from a small category is small-admissible, in which case we have a particularly sharp correspondence between $\Phi$-adjointness and $\Psi$-cocontinuity. For instance, let $\Psi$ be the class of finite weights, let $\Phi \defeq \Psi^{{+}}$ be the class of filtered weights, and let $a$ be a small finitely complete $\V$-category. It then follows from \cref{admissibility-as-relative-adjointness} that a $\V$-functor $f \colon a \to b$ is left-adjoint relative to the cocompletion $a \ffto \b{Ind}(a)$ of $a$ under filtered colimits if and only if $f$ preserves finite colimits. \Cref{AFTs-for-weights} thus recovers the relative adjoint functor theorem of \cite[\S3.2]{arkor2022monadic}.
    % https://q.uiver.app/#q=WzAsMyxbMCwxLCJhIl0sWzIsMSwiXFxie0luZH0oYSkiXSxbMSwwLCJiIl0sWzAsMiwiZiJdLFsyLDEsIiIsMCx7InN0eWxlIjp7ImJvZHkiOnsibmFtZSI6ImRhc2hlZCJ9fX1dLFswLDEsIiIsMix7InN0eWxlIjp7InRhaWwiOnsibmFtZSI6Imhvb2siLCJzaWRlIjoidG9wIn19fV0sWzMsNCwiIiwwLHsib2Zmc2V0IjoyLCJsZXZlbCI6MSwiZWRnZV9hbGlnbm1lbnQiOnsic291cmNlIjpmYWxzZSwidGFyZ2V0IjpmYWxzZX0sInN0eWxlIjp7Im5hbWUiOiJhZGp1bmN0aW9uIn19XV0=
    \[\begin{tikzcd}
    	& b \\
    	a && {\b{Ind}(a)}
    	\arrow[""{name=0, anchor=center, inner sep=0}, dashed, from=1-2, to=2-3]
    	\arrow[""{name=0p, anchor=center, inner sep=0}, phantom, from=1-2, to=2-3, start anchor=center, end anchor=center]
    	\arrow[""{name=1, anchor=center, inner sep=0}, "f", from=2-1, to=1-2]
    	\arrow[""{name=1p, anchor=center, inner sep=0}, phantom, from=2-1, to=1-2, start anchor=center, end anchor=center]
    	\arrow[hook, from=2-1, to=2-3]
    	\arrow["\dashv"{anchor=center, rotate=2}, shift right=2, draw=none, from=1p, to=0p]
    \end{tikzcd}\]
\end{example}

\begin{remark}
    \label{more-general-AFTs}
    We do not know whether the weak adjoint functor theorem of \cite[Theorem~2.1]{kainen1971weak} follows from \cref{DAFT}. This would require finding a lax-idempotent pseudomonad $I$ on $\CAT$ such that $I$ lifts to the 2-category of locally small categories with small coproducts, and for which $I$-admissibility is equivalent to weak adjointness. There does not appear to be a clear candidate. In analogy with the lifting of the free sifted cocompletion to categories with \emph{finite} coproducts, one might expect that the free cocompletion under reflexive coequalisers would be a suitable candidate for $I$. Unfortunately, \cite[Example~4.3]{adamek2000duality} exhibits an obstruction (\cf{}~\cite[\S3]{adamek2000duality}). Another candidate for $I$ is the free cocompletion under cointersections of regular epimorphisms (\cf{}~\cref{well-cocomplete-AFT}). However, in this case, it is not clear that $I$ lifts as required.

    Similar remarks apply to the weak adjoint functor theorems of \cite[Theorem~7.7]{bourke2023adjoint}, and a hypothetical polyadjoint functor theorem in the sense of \cite[35]{lamarche1988modelling}.
\end{remark}

\begin{remark}
    \label{not-all-colimits}
    \Cref{DAFT} requires both the domain and the codomain of the $R$-cocontinuous 1-cell $f \colon a \to b$ to be $R$-cocomplete, since this is required to even speak of $R$-cocontinuity in terms of $R$-pseudomorphisms. However, strictly speaking, it ought not to be necessary to assume the existence of all $R$-colimits in the codomain $b$. It is likely our proof could be modified to require only the preservation by $f$ of certain left extensions (\cf{}~\cite[Definition~20]{walker2019distributive}). However, in practice, $R$-cocompleteness of both the domain and codomain is usually satisfied, and we view the simplicity of the proof of \cref{DAFT} to outweigh the marginal gain in generality offered by such a modification.
\end{remark}

\subsection{Adjointness and presentability}

We conclude with an application of \cref{admissibility-for-composite} to the theory of locally presentable categories, complementing that of \cref{RAFT-for-LP-categories}. This does not require the full strength of \cref{DAFT}, but is nonetheless another useful consequence of the interaction between pseudodistributive laws and admissibility in relation to adjointness.

For the remainder, we shall take $R$ and $I$ to be lax-idempotent pseudomonads for which $R$ distributes over $I$ (local full faithfulness of $I$ is not required), and denote by $\iota \colon 1 \tto I$ the unit of $I$. We recall from \cite[\S5]{marmolejo2012kan} that $I$-cocompleteness of an object $a$ may be characterised in terms of the existence of left extensions along the unit $\iota_a \colon a \to I a$.

\begin{lemma}
    \label{left-adjoint-left-extension}
    Let $a$ be an $R$-cocomplete object and let $x$ be an $IR$-cocomplete object. Let $f \colon a \to x$ be an $R$-cocontinuous and $IR$-admissible 1-cell. Then the left extension $\iota_a \lx f \colon I a \to x$ of $f$ along $\iota_a$ is left-adjoint.
\end{lemma}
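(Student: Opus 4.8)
The plan is to deduce the result by combining \cref{IR-cocomplete} and \cref{admissibility-for-composite} with the standard description of left extensions along the unit of a lax-idempotent pseudomonad. Since $x$ is $IR$-cocomplete, \cref{IR-cocomplete} shows that $x$ is both $R$-cocomplete and $I$-cocomplete; write $\xi \colon Ix \to x$ for its $I$-pseudoalgebra structure. In particular $f \colon a \to x$ is an $R$-cocontinuous 1-cell between $R$-cocomplete objects, and it is $IR$-admissible by hypothesis, so \cref{admissibility-for-composite} applies (note that it does not require $I$ to be locally \ff{}, consistent with the standing assumptions of this subsection) and shows that $f$ is $I$-admissible; hence $If \colon Ia \to Ix$ is left adjoint, with right adjoint $f^I$.

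Next I would invoke the characterisation of $I$-cocompleteness recalled from \cite[\S5]{marmolejo2012kan}: because $I$ is lax-idempotent and $x$ is $I$-cocomplete, the left extension along $\iota_a$ of any 1-cell $g \colon a \to x$ exists and is computed as $\xi \circ Ig \colon Ia \to Ix \to x$. Applying this with $g = f$ gives $\iota_a \lx f \simeq \xi \circ If$. (The universal 2-cell $f \tto (\iota_a \lx f) \circ \iota_a$ is the pasting of the pseudonaturality isomorphism $If \circ \iota_a \simeq \iota_x \circ f$ with the inverse of the counit of $\xi \adj \iota_x$, and is in fact invertible.)

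Finally, since $(x, \xi)$ is an $I$-pseudoalgebra and $I$ is lax-idempotent, the structure 1-cell $\xi$ is left adjoint — to $\iota_x$, with invertible counit — by the definition of lax-idempotence (\cref{pseudoalgebra}). Hence $\iota_a \lx f \simeq \xi \circ If$ is a composite of left adjoints and is therefore left adjoint; explicitly, $\iota_a \lx f \adj f^I \circ \iota_x$. The argument is almost entirely a matter of assembling results already in hand; the one step with any content is the identification of $\iota_a \lx f$ with $\xi \circ If$, which is precisely where $I$-cocompleteness of $x$ is used, and there is no real obstacle, since the concluding step rests only on the closure of left adjoints under composition.
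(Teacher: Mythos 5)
Your proposal is correct and follows essentially the same route as the paper: deduce $I$-admissibility of $f$ from \cref{admissibility-for-composite}, identify $\iota_a \lx f$ with the composite of $If$ and the $I$-pseudoalgebra structure of $x$ (which the paper denotes $\chi^I$), and conclude by closure of left adjoints under composition. Your added remarks — that \cref{admissibility-for-composite} does not need local full faithfulness of $I$, and the explicit right adjoint $f^I \circ \iota_x$ — are accurate elaborations of the same argument.
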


\begin{proof}
    Since $I$ is lax-idempotent and $x$ is $IR$-cocomplete, hence in particular both $I$-cocomplete and $R$-cocomplete by \cref{IR-cocomplete}, the following diagram commutes up to isomorphism, where we denote by $\chi^I$ the $I$-pseudoalgebra structure of $x$.
    % https://q.uiver.app/?q=WzAsNCxbMCwxLCJhIl0sWzEsMSwieCJdLFswLDAsIklhIl0sWzEsMCwiSXgiXSxbMCwxLCJmIiwyXSxbMiwxLCJcXGlvdGFfYSBcXHJoZCBmIiwxXSxbMCwyLCJcXGlvdGFfYSJdLFsyLDMsIklmIl0sWzMsMSwiXFxjaGleSSJdLFs1LDMsIlxcaXNvIiwxLHsibGFiZWxfcG9zaXRpb24iOjcwLCJzaG9ydGVuIjp7InNvdXJjZSI6MjB9LCJzdHlsZSI6eyJib2R5Ijp7Im5hbWUiOiJub25lIn0sImhlYWQiOnsibmFtZSI6Im5vbmUifX19XSxbMCw1LCJcXGlzbyIsMSx7ImxhYmVsX3Bvc2l0aW9uIjozMCwic2hvcnRlbiI6eyJ0YXJnZXQiOjIwfSwic3R5bGUiOnsiYm9keSI6eyJuYW1lIjoibm9uZSJ9LCJoZWFkIjp7Im5hbWUiOiJub25lIn19fV1d
    \[\begin{tikzcd}[sep=large]
    	Ia & Ix \\
    	a & x
    	\arrow["f"', from=2-1, to=2-2]
    	\arrow[""{name=0, anchor=center, inner sep=0}, "{\iota_a \rhd f}"{description}, from=1-1, to=2-2]
    	\arrow["{\iota_a}", from=2-1, to=1-1]
    	\arrow["If", from=1-1, to=1-2]
    	\arrow["{\chi^I}", from=1-2, to=2-2]
    	\arrow["\iso"{description, pos=0.7}, draw=none, from=0, to=1-2]
    	\arrow["\iso"{description, pos=0.3}, draw=none, from=2-1, to=0]
    \end{tikzcd}\]
    Since $f$ is $R$-cocontinuous and $IR$-admissible, it is $I$-admissible by \cref{admissibility-for-composite}. Therefore $I f$ and $\chi^I$, and hence also their (pseudo)composite $\iota_a \lx f$, admit right adjoints.
\end{proof}

\begin{definition}
    \label{presentable-object}
    An object $x$ of $\K$ is \emph{$(R, I)$-presentable} if there exists an $R$-cocomplete object $a$ for which $I a \equiv x$, and for which every 1-cell with domain $a$ is $IR$-admissible.
\end{definition}

\begin{remark}
    Taking $R$ and $I$ to be an envelope in the sense of \cref{envelope}, \cref{presentable-object} recovers the \emph{presentable objects} of \cite[Definition~2.40]{di2023accessibility}.
\end{remark}

\begin{corollary}
    \label{AFT-for-presentable-objects}
    A 1-cell from an $(R, I)$-presentable object into an $IR$-cocomplete object is left-adjoint if and only if it is $IR$-cocontinuous.
\end{corollary}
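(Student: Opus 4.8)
The plan is to reduce the claim to \cref{left-adjoint-left-extension} by precomposing with the unit of $I$, and then to recognise the resulting left extension as the original 1-cell. Fix $g \colon x \to y$ with $x$ an $(R,I)$-presentable object and $y$ an $IR$-cocomplete object; by \cref{presentable-object} we may assume $x = Ia$ for some $R$-cocomplete $a$ every outgoing 1-cell of which is $IR$-admissible. First note that $x = Ia$ is itself $IR$-cocomplete: it is $I$-cocomplete as a free $I$-pseudoalgebra, and it is $R$-cocomplete because the lifting $\I$ of \cref{composite} sends the $R$-pseudoalgebra $a$ to an $R$-pseudoalgebra with underlying object $Ia$; hence $x$ is $IR$-cocomplete by \cref{IR-cocomplete}. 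So ``$g$ is $IR$-cocontinuous'' is meaningful, and the forward implication is immediate, since $IR$ is lax-idempotent and any left adjoint is cocontinuous for a lax-idempotent pseudomonad~\cite[Proposition~5.1]{marmolejo1997doctrines}.

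For the converse, suppose $g$ is $IR$-cocontinuous and put $f \defeq g \circ \iota_a \colon a \to y$. I would then verify that $f$ satisfies the hypotheses of \cref{left-adjoint-left-extension}, with the ``$x$'' there taken to be $y$: it is $IR$-admissible because $a$ is exactly the $R$-cocomplete object witnessing $(R,I)$-presentability of $x$, and it is $R$-cocontinuous, being the composite of $\iota_a \colon a \to Ia$ — which is $R$-cocontinuous because $\I$ lifts the unit of $I$ to a (pseudo)natural transformation in $R\h\Alg$ — and $g$ — which is $R$-cocontinuous as a consequence of being $IR$-cocontinuous, by the implication (1$'$)\,$\Rightarrow$\,(3$'$) of \cref{IR-cocomplete}. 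Therefore \cref{left-adjoint-left-extension} applies and $\iota_a \lx f \colon Ia \to y$ is left adjoint.

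It remains to identify $\iota_a \lx f$ with $g$. Since $g$ is $IR$-cocontinuous it is in particular $I$-cocontinuous (\cref{IR-cocomplete} again), so by the universal property of $\iota_a \colon a \to Ia$ as the free $I$-cocompletion of $a$, the 1-cell $g$ is the left extension along $\iota_a$ of its own restriction $g \circ \iota_a = f$; that is, $g \equiv \iota_a \lx f$. (Alternatively, one may argue directly: from the proof of \cref{left-adjoint-left-extension} one has $\iota_a \lx f \equiv \chi^I \circ If$ with $\chi^I \colon Iy \to y$ the $I$-pseudoalgebra structure of $y$, and then the $I$-pseudomorphism square for $g$ together with the unit coherence of the pseudomonad $I$ give $\iota_a \lx f \equiv \chi^I \circ Ig \circ I\iota_a \equiv g \circ \mu_a \circ I\iota_a \equiv g$, where $\mu_a \colon IIa \to Ia$ is the free $I$-pseudoalgebra structure.) Hence $g$ is left adjoint, as required. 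The only step that is more than a routine appeal to the results above is this final identification, which encodes the standard fact that, for a lax-idempotent pseudomonad, left extension along the unit inverts restriction along it on cocontinuous 1-cells into a cocomplete object; the rest is bookkeeping.
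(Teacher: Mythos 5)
Your proof is correct and follows essentially the same route as the paper: both reduce the nontrivial direction to \cref{left-adjoint-left-extension} by restricting $g$ along $\iota_a$, checking that $g\iota_a$ is $R$-cocontinuous (via pseudodistributivity for $\iota_a$) and $IR$-admissible (via presentability), and then identifying $g$ with the left extension $\iota_a \lx (g\iota_a)$ using lax-idempotence of $I$. The extra details you supply --- the verification that $Ia$ is itself $IR$-cocomplete and the explicit computation $\chi^I \circ If \equiv g$ --- are correct elaborations of steps the paper leaves implicit.
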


\begin{proof}
    Since $I$ is lax-idempotent, a $I$-cocontinuous 1-cell $g \colon I a \to x$ into an $I$-cocomplete object is a left extension $\iota_a \lx (g \iota_a)$ along the unit of $I$. Since $a$ is $R$-cocomplete and $g$ is $R$-cocontinuous, then $g \iota_a$ is also $R$-cocontinuous, since $\iota_a$ is $R$-cocontinuous by virtue of pseudodistributivity. Finally, $g \iota_a$ is $IR$-admissible by $(R, I)$-presentability of $I a$. The result then follows from \cref{left-adjoint-left-extension}.
\end{proof}

\begin{example}
    \label{presentable-AFT}
    Taking $I$ and $R$ as in \cref{Rex-Ind}, an $(R, I)$-presentable object is exactly a locally finitely presentable category. Consequently, \cref{AFT-for-presentable-objects} states that every functor from a locally finitely presentable category into a small-cocomplete category is left-adjoint if and only if it is small-cocontinuous~(\cf{}~\cref{RAFT-for-LP-categories}). Corresponding statements hold for presentability with respect to any weakly sound class of weights $\Psi$ (\cf{}~\cite{adamek2002classification,lack2011notions}).
\end{example}

\appendix

\section{Composite lax-idempotent pseudomonads are coproducts}

\newcommand{\GRAY}{\b{GRAY}}
\newcommand{\Psmnd}{\b{Psmd}}
\newcommand{\Lift}{\b{Lift}}
\newcommand{\PsmdK}{\Psmnd_\K}
\newcommand{\LiftK}{\Lift_\K}

Our purpose in this section is to give a proof of the observation in \cref{remark-composite-is-coproduct}.
In the following, we denote by $\GRAY$ the Gray-category of Gray-categories, by $\PsmdK$ the 2-category of pseudomonads and pseudomorphisms on $\K$, and by $\LiftK$ the 2-category of pseudomonads and liftings on $\K$. Explicitly, $\PsmdK\op$ is given by the sub-2-category of the Gray-category $\Psmnd(\GRAY)$ of pseudomonads and pseudomorphisms in $\GRAY$~\cite[\S2]{gambino2021formal} that is spanned by pseudomonads on $\K$, pseudomonad morphisms whose 1-cells are trivial, and pseudomonad transformations whose 2-cells are trivial; similarly, $\LiftK\op$ is given by the sub-2-category of the Gray-category $\Lift(\GRAY)$ of pseudomonads and liftings in $\GRAY$~\cite[\S3]{gambino2021formal} that is spanned by pseudomonads on $\K$, liftings whose 1-cells are trivial, and lifting transformations whose 2-cells are trivial.

\begin{theorem}
    \label{composite-is-coproduct}
    Let $R$ and $I$ be lax-idempotent pseudomonads on a 2-category $\K$ for which $R$ distributes over $I$. Then the composite lax-idempotent pseudomonad $IR$ is the bicoproduct $R + I$ in the 2-category $\PsmdK$.
\end{theorem}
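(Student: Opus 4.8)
The plan is to establish the universal property of the bicoproduct directly, working in the 2-category $\PsmdK$ of pseudomonads on $\K$, pseudomonad morphisms, and pseudomonad transformations. Recall that a bicoproduct $R + I$ is an object equipped with coprojection pseudomonad morphisms $R \to IR$ and $I \to IR$ such that, for every pseudomonad $S$ on $\K$, the induced functor $\PsmdK(IR, S) \to \PsmdK(R, S) \times \PsmdK(I, S)$ is an equivalence of categories. First I would identify the coprojections: the unit of the pseudodistributive law provides canonical pseudomonad morphisms $\iota R \colon R \Rightarrow IR$ and $I \rho \colon I \Rightarrow IR$ (using the notation $\rho$, $\iota$ for the units of $R$, $I$ as in the excerpt), and one checks as usual that these are pseudomonad morphisms with respect to the composite pseudomonad structure on $IR$ obtained from the distributive law.

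The crux is that, because $R$ and $I$ are \emph{lax-idempotent}, pseudomonad morphisms out of them are severely constrained: a pseudomonad morphism $R \Rightarrow S$ for $S$ lax-idempotent is essentially the same data as a mere pseudonatural transformation $R \Rightarrow S$ commuting with the units, i.e. the coherence conditions involving the multiplications are automatic (this is the 2-categorical analogue of the fact that, for idempotent monads, a monoid morphism is determined by a map commuting with units; compare the treatment of lax-idempotent pseudomonads in \cite{marmolejo1997doctrines,kelly1997property,walker2019distributive}). Using this, I would show that giving a pseudomonad morphism $IR \Rightarrow S$ into a lax-idempotent $S$ amounts to giving a pair of pseudomonad morphisms $R \Rightarrow S$ and $I \Rightarrow S$ together with a compatibility 2-cell witnessing that the two "do not interfere" — and that for lax-idempotent pseudomonads this compatibility is itself automatic (a property, not a structure), precisely because \cref{IR-cocomplete} shows an $IR$-pseudoalgebra structure is the same as a compatible pair of $R$- and $I$-pseudoalgebra structures on the same object, with no extra coherence datum. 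Dualizing through the (bi)equivalence between pseudomonads and their 2-categories of pseudoalgebras, and invoking \cref{IR-cocomplete} for pseudoalgebras and its pseudomorphism counterpart, the desired equivalence $\PsmdK(IR, S) \simeq \PsmdK(R, S) \times \PsmdK(I, S)$ falls out.

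Concretely the steps are: (i) verify the coprojections are pseudomonad morphisms; (ii) record the lax-idempotence simplification, that $\PsmdK(T, S)$ for lax-idempotent $T, S$ is equivalent to a poset-like category of "sub-pseudomonad" inclusions, and in particular is locally discrete up to the data already present — the key input being that $2$-cells between pseudomonad morphisms of lax-idempotent pseudomonads are unique when they exist; (iii) use the description of liftings from \cite[\S3]{gambino2021formal} together with \cref{composite} to see that the data of a lifting $\I$ of $I$ to $R\h\Alg$ is exactly what exhibits $IR$ as built from $R$ and $I$ with no choices; (iv) assemble (ii)–(iii) with \cref{IR-cocomplete} to produce the comparison equivalence and check it is the one induced by the coprojections. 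The main obstacle I anticipate is step (ii)–(iv) bookkeeping: making precise and correctly invoking the fact that, for lax-idempotent pseudomonads, morphisms and $2$-cells in $\PsmdK$ are "property-like", so that the $2$-category $\PsmdK$ restricted to lax-idempotent pseudomonads behaves like a preorder-enriched setting in which bicoproducts reduce to a joint-universal-property check. This is morally well known (it is implicit in \cite{walker2019distributive} and in the idempotent-pseudomonad folklore), but stating the needed lemma cleanly — perhaps "a pseudomonad morphism between lax-idempotent pseudomonads is uniquely determined, up to unique invertible $2$-cell, by its underlying unit-preserving pseudonatural transformation" — and citing or proving it is where the real work lies; the rest is then a formal consequence of \cref{IR-cocomplete}.
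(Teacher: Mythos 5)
Your plan identifies the right key input---\cref{IR-cocomplete}---and the right auxiliary tool (the correspondence between pseudomonad morphisms and liftings from \cite[\S3]{gambino2021formal}), but as it stands there is a genuine gap in the central step, and the gap is avoidable. First, the bicoproduct in $\PsmdK$ must satisfy its universal property against \emph{every} pseudomonad $S$ on $\K$, whereas your ``lax-idempotence simplification'' is stated only for lax-idempotent $S$; verifying the comparison $\PsmdK(IR,S) \to \PsmdK(R,S) \times \PsmdK(I,S)$ only for such $S$ would establish that $IR$ is the bicoproduct merely in the full sub-2-category of lax-idempotent pseudomonads, which is weaker than the statement. Second, the lemma you yourself flag as ``where the real work lies''---that a pseudomonad morphism out of a lax-idempotent pseudomonad is determined by a unit-compatible pseudonatural transformation, the multiplication coherences being automatic---is left unproved, and proving it directly from the algebraic presentation of pseudomonad morphisms is genuinely delicate.

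The paper's argument shows that neither difficulty needs to be confronted. By \cite[Theorem~3.4]{gambino2021formal} the 2-functor $\ph\h\Alg \colon \PsmdK\op \to \GRAY/\K$ is essentially fully faithful, hence reflects bilimits; and \cref{IR-cocomplete} says exactly that $IR\h\Alg$ is the pseudopullback $R\h\Alg \times_{\K} I\h\Alg$, \ie{} the biproduct of $U_R$ and $U_I$ in $\GRAY/\K$. Reflecting along $\ph\h\Alg$ yields the bicoproduct in $\PsmdK$ against arbitrary $S$, with no appeal to property-likeness. Your steps (i)--(iv) essentially reconstruct this after unwinding: under the liftings correspondence one has $\PsmdK(T,S) \simeq (\GRAY/\K)(S\h\Alg, T\h\Alg)$, so the universal property you propose to verify by hand \emph{is} the universal property of the pseudopullback. (Your property-likeness intuition is sound in spirit---for lax-idempotent $R$ the forgetful 2-functor $R\h\Alg \to \K$ is locally fully faithful and algebra structure is essentially unique, so $\PsmdK(R,S)$ is empty or contractible for \emph{any} $S$, not just lax-idempotent ones---but the clean way both to see this and to dispense with it is the liftings correspondence. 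Note also that $\ph\h\Alg$ is only essentially fully faithful, not a biequivalence onto $\GRAY/\K$ as your phrase ``dualizing through the (bi)equivalence'' suggests; essential full faithfulness is precisely what is needed to reflect the bilimit.)
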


\begin{proof}
    Specialising \cite[Theorem~3.4]{gambino2021formal}, which establishes a correspondence between pseudomonad morphisms and liftings, to $\PsmdK$, it follows that the 2-functor $\ph\h\Alg \colon \PsmdK\op \biequiv \b{Lift}_\K\op \ffto \GRAY/\K$ is essentially \ff{}, and hence reflects bilimits. By \cref{IR-cocomplete}, $IR\h\Alg$ exhibits the following pseudopullback of 2-categories, which is the pseudoproduct $U_R \times U_I$ in $\GRAY/\K$. Consequently, $IR$ is the bicoproduct $R + I$ in $\PsmdK$.
    % https://q.uiver.app/#q=WzAsNCxbMCwwLCJJUlxcaFxcQWxnIl0sWzEsMCwiSVxcaFxcQWxnIl0sWzAsMSwiUlxcaFxcQWxnIl0sWzEsMSwiXFxLIl0sWzAsMV0sWzAsMl0sWzIsMywiVV9SIiwyXSxbMSwzLCJVX0kiXSxbMCw2LCIiLDAseyJsZXZlbCI6MSwic3R5bGUiOnsibmFtZSI6ImNvcm5lciJ9fV1d
    \[\begin{tikzcd}
    	IR\h\Alg & I\h\Alg \\
    	R\h\Alg & \K
    	\arrow[from=1-1, to=1-2]
    	\arrow[from=1-1, to=2-1]
    	\arrow[""{name=0, anchor=center, inner sep=0}, "{U_R}"', from=2-1, to=2-2]
    	\arrow["{U_I}", from=1-2, to=2-2]
    	\arrow["\lrcorner"{anchor=center, pos=0.125}, draw=none, from=1-1, to=0]
    \end{tikzcd}\]
\end{proof}

\printbibliography

\end{document}